\documentclass{amsart}

\usepackage{amsmath,amsthm,amssymb,amscd}
\usepackage[usenames, dvipsnames]{color}
\usepackage{mathtools}

\newtheorem{thm}{Theorem}[section]
\newtheorem{lem}[thm]{Lemma}
\newtheorem{cor}[thm]{Corollary}
\newtheorem{problem}[thm]{Problem}

\newtheorem{prop}[thm]{Proposition}

\newtheorem{ex}[thm]{Example}
\newtheorem{rem}[thm]{Remark}

\newcommand{\R}{\mathbb{R}}
\newcommand{\N}{\mathbb{N}}
\newcommand{\I}{\mathcal{I}}

\renewcommand{\phi}{\varphi}
\renewcommand{\epsilon}{\varepsilon}
\renewcommand{\emptyset}{\varnothing}

\begin{document}

\title{Differentiability of continuous functions in terms of Haar-smallness}

\author[A. Kwela]{Adam Kwela}
\address{Institute of Mathematics, Faculty of Mathematics, Physics and Informatics, University of Gda\'{n}sk, ul.~Wita  Stwosza 57, 80-308 Gda\'{n}sk, Poland}
\email{Adam.Kwela@ug.edu.pl}

\author[W.A. Wo{\l}oszyn]{Wojciech Aleksander Wo{\l}oszyn}
\address{Institute of Mathematics, Faculty of Mathematics, Physics and Informatics, University of Gda\'{n}sk, ul.~Wita  Stwosza 57, 80-308 Gda\'{n}sk, Poland}
\email{woloszyn.w.a@gmail.com}

\subjclass[2010]{Primary:
26A27;       
secondary:
03E15,         
26B05,       
54H11.       
}
\keywords{Nowhere differentiable functions, Continuous functions, Haar-small sets, Haar-null sets, Haar-meager sets}

\begin{abstract}
One of the classical results concerning differentiability of continuous functions states that the set $\mathcal{SD}$ of somewhere differentiable functions (i.e., functions which are differentiable at some point) is Haar-null in the space $C[0,1]$. By a recent result of Banakh et al., a set is Haar-null provided that there is a Borel hull $B\supseteq A$ and a continuous map $f\colon \{0,1\}^\N\to C[0,1]$ such that $f^{-1}[B+h]$ is Lebesgue's null for all $h\in C[0,1]$. 

We prove that $\mathcal{SD}$ is not Haar-countable (i.e., does not satisfy the above property with "Lebesgue's null" replaced by "countable", or, equivalently, for each copy $C$ of $\{0,1\}^\N$ there is an $h\in C[0,1]$ such that $\mathcal{SD}\cap (C+h)$ is uncountable.

Moreover, we use the above notions in further studies of differentiability of continuous functions. Namely, we consider functions differentiable on a set of positive Lebesgue's measure and functions differentiable almost everywhere with respect to Lebesgue's measure. Furthermore, we study multidimensional case, i.e., differentiability of continuous functions defined on $[0,1]^k$. Finally, we pose an open question concerning Takagi's function.
\end{abstract}

\maketitle

\section{Introduction}

We follow the standard topological notation and terminology. By $|X|$ we denote the cardinality of a set $X$.

For a function $f\in C[0,1]$, by $D(f)$ we denote the set of all points $x\in [0,1]$ at which $f$ is differentiable. There are examples of continuous functions such that $D(f)=\emptyset$. One of the first and simplest examples is the famous Takagi's function $T\colon\R\to\R$ given by $T(x)=\sum_{i=0}^\infty \frac{1}{2^i}\text{dist}(2^i x,\mathbb{Z})$ (see \cite{Allaart}, \cite{survey}, or \cite{Takagi}). The size of the set of somewhere differentiable functions, i.e., functions $f$ such that $D(f)\neq\emptyset$, is a classical object of studies since Banach's result stating that this set is meager in $C[0,1]$ (cf. \cite{Banach}). One of the well-known results in this subject is Hunt's theorem stating that the aforementioned set is Haar-null in the space $C[0,1]$ (see \cite{Hunt}). This notion was first introduced in \cite{Christensen}  by Christensen. He called a subset $A$ of an abelian Polish group $X$ \emph{Haar-null} provided that there is a Borel hull $B\supseteq A$ and a Borel $\sigma$-additive probability measure $\lambda$ on $X$ such that $\lambda(B+x)=0$ for all $x\in X$ (actually, in the original paper Christensen introduced this notion using universally measurable hulls instead of Borel hulls, however in many papers, e.g. \cite{1} or \cite{Darji}, Haar-null sets are defined in the same way as above). A big advantage of this concept is that in a locally compact group it is equivalent to the notion of Haar measure zero sets and at the same time it can be used in a significantly larger class of groups. In \cite{HSY} Hunt, Sauer and Yorke, unaware of Christensen's paper, reintroduced the notion of Haar-null sets in the context of dynamical systems (in their paper Haar-null sets are called shy sets, and their complements are called prevalent sets). 

Actually, since the set of somewhere differentiable functions is not Borel, Hunt had to show something more: the set of somewhere Lipschitz functions is Haar-null in $C[0,1]$ (a function $f\in C[0,1]$ is \emph{somewhere Lipschitz} whenever there is an $x\in[0,1]$ and an $M\in\mathbb{N}$ such that $|f(x)-f(y)|\leq M|x-y|$ for each $y\in[0,1]$; observe that each somewhere differentiable function is somewhere Lipschitz).

In this paper, we are interested in the following notions of smallness. A subset $A$ of an abelian Polish group $X$ is:
\begin{itemize} 
	\item Haar-countable if there is a Borel hull $B\supseteq A$ and a copy $C$ of $\{0,1\}^\N$ such that $(C+x)\cap B$ is countable for all $x\in X$;
	\item Haar-finite if there is a Borel hull $B\supseteq A$ and a copy $C$ of $\{0,1\}^\N$ such that $(C+x)\cap B$ is finite for all $x\in X$;
	\item Haar-$n$, for $n\in\mathbb{N}$, if there is a Borel hull $B\supseteq A$ and a copy $C$ of $\{0,1\}^\N$ such that $|(C+x)\cap B|\leq n$ for all $x\in X$.
\end{itemize}
Clearly,
$$\text{Haar-}n\ \Longrightarrow\ \text{Haar-}(n+1)\ \Longrightarrow\ \text{Haar-finite}\ \Longrightarrow\ \text{Haar-countable}$$
for any $n\in\N$. 

The choice of names in the above is due to Banakh et al., who recently unified the notions of Haar-null sets and Haar-meager sets (defined by Darji in \cite{Darji}) in \cite{1} by introducing the concept of Haar-small sets. A collection of subsets of a set $X$ is called a semi-ideal whenever it is closed under taking subsets. Following \cite[Definition 11.1]{1}, for a semi-ideal $\I$ on the Cantor cube $\{0,1\}^\N$, we say that a subset $A$ of an abelian Polish group $X$ is Haar-$\I$ if there is a Borel hull $B\supseteq A$ and a continuous map $f\colon \{0,1\}^\N\to X$ such that $f^{-1}[B+x]\in\I$ for all $x\in X$. It turns out that if $\I$ is the $\sigma$-ideal $\mathcal{N}$ of subsets of $\{0,1\}^\N$ of Haar measure zero, then we obtain Haar-null sets (cf. \cite[Theorem 11.5]{1}). The same holds for the $\sigma$-ideal $\mathcal{M}$ of meager sets and Haar-meager sets (cf. \cite[Theorem 11.5]{1}), the $\sigma$-ideal of countable sets and Haar-countable sets, the ideal of finite sets and Haar-finite sets and the semi-ideal of sets of cardinality at most $n$ and Haar-$n$ sets (cf. \cite[Proposition 1.2]{ja}).

Obviously, the collection of Haar-$\I$ subsets of an abelian Polish group is a semi-ideal. Observe that $\I\subseteq\mathcal{J}$ implies that each Haar-$\I$ set is Haar-$\mathcal{J}$. Thus, Haar-null sets and Haar-meager sets only allow us to say that some properties are rare (and we cannot put them on a scale and compare with other rare properties), whereas Haar-$\I$ sets allow us to develop a whole hierarchy of small sets.

Haar-countable, Haar-finite, and Haar-$n$ sets were profoundly studied by the first author in \cite{ja}. There are compact examples showing that none of the above implications can be reversed in $C(K)$ where $K$ is compact metrizable. Moreover, it is known that neither Haar-finite sets nor Haar-$n$ sets form an ideal (see \cite[Corollary 5.2 and Theorem 6.1]{ja}). Zakrzewski considered Haar-small sets in \cite{Zakrzewski} under the name of perfectly $\kappa$-small sets. A particular case of Haar-$1$ sets was investigated by Balcerzak in \cite{Balcerzak}. He introduced the so-called property (D) of a $\sigma$-ideal $\I$, which says that there is a Borel Haar-$1$ set not belonging to $\I$. Moreover, Banakh, Lyaskovska, and Repov\v s considered packing index of a set in \cite{BLR}. Packing index is closely connected to Haar-$1$ sets (namely, a Borel set is Haar-$1$ if and only if its packing index is uncountable). In turn, Haar-countable sets were studied by Darji and Keleti in \cite{DK} and by Elekes and Stepr\=ans in \cite{Elekes}.

\section{Nowhere differentiable functions}

Hunt proved in \cite{Hunt} that the set $\mathcal{SD}$ of somewhere differentiable functions is Haar-null in $C[0,1]$. Actually, a closer look at his proof gives us something more. Denote by $\mathcal{E}$ ($\mathcal{E}(\mathbb{R})$) the $\sigma$-ideal generated by compact Haar null subsets of $\{0,1\}^\mathbb{N}$ ($\mathbb{R}$, respectively) and recall that $\mathcal{E}$ is a proper subfamily of $\mathcal{N}\cap\mathcal{M}$ (see \cite[Lemma 2.6.1]{Judah}). Hunt's proof shows that there is a continuous function $\phi:\mathbb{R}^2\to C[0,1]$ such that $\phi^{-1}[f+\mathcal{SD}]\in\mathcal{E}(\mathbb{R})$ for all $f\in C[0,1]$. If $\psi:\{0,1\}^\mathbb{N}\to[0,1]^2$ denotes the map given by $\psi((x_i))=(\sum_{i\in\mathbb{N}}\frac{x_{2i-1}}{2^i},\sum_{i\in\mathbb{N}}\frac{x_{2i}}{2^i})$ then $\phi^{-1}[A]\in\mathcal{E}$ for all $A\in\mathcal{E}(\mathbb{R})$. Thus, $\psi^{-1}[\phi^{-1}[f+\mathcal{SD}]]\in\mathcal{E}$ for all $f\in C[0,1]$, i.e., we have shown the following:

\begin{thm}[{Hunt, \cite{Hunt}}]
The set $\mathcal{SD}$ is Haar-$\mathcal{E}$ in $C[0,1]$.
\end{thm}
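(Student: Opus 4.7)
The plan is to follow the two-step argument sketched in the paragraph preceding the theorem: first, refine Hunt's classical proof to obtain a continuous 2-parameter probe whose exceptional sets lie in $\mathcal{E}(\mathbb{R}^2)$ rather than merely in the $\sigma$-ideal of Lebesgue null sets; then transfer this probe to $\{0,1\}^{\mathbb{N}}$ via the dyadic expansion map $\psi$.

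For the first step, I would reread Hunt's proof with the observation that it already reduces $\mathcal{SD}$ to a countable union of closed sets. Indeed, $\mathcal{SD}\subseteq\bigcup_{M\in\mathbb{N}}L_M$, where $L_M$ is the set of $f\in C[0,1]$ that are Lipschitz at some point with constant at most $M$; each $L_M$ is closed in $C[0,1]$ by a standard compactness argument using $[0,1]$. Hunt's continuous 2-parameter probe $\phi\colon\mathbb{R}^2\to C[0,1]$ then has, for every fixed $f\in C[0,1]$ and every $M\in\mathbb{N}$, a \emph{closed} exceptional set $\{(a,b)\in\mathbb{R}^2:\phi(a,b)\in f+L_M\}$ of planar Lebesgue measure zero. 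Such a set is $\sigma$-compact, and on $\mathbb{R}^2$ a compact Lebesgue null set is precisely a compact Haar null set, so the full exceptional set $\phi^{-1}[f+\mathcal{SD}]$ lies in $\mathcal{E}(\mathbb{R}^2)$.

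For the second step, compose $\phi$ with $\psi\colon\{0,1\}^{\mathbb{N}}\to[0,1]^2$ as given in the excerpt. Continuity of $\phi\circ\psi$ is automatic. To finish I need only verify that $\psi^{-1}[A]\in\mathcal{E}$ whenever $A\in\mathcal{E}(\mathbb{R}^2)$. By $\sigma$-subadditivity it suffices to treat compact Haar null $A\subseteq[0,1]^2$: for such $A$ the preimage $\psi^{-1}[A]$ is compact by continuity, and the standard fact that $\psi$ pushes the uniform product measure on $\{0,1\}^{\mathbb{N}}$ forward to Lebesgue measure on $[0,1]^2$ implies that $\psi^{-1}[A]$ has Haar measure zero in $\{0,1\}^{\mathbb{N}}$, hence belongs to $\mathcal{E}$. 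Combining the two steps gives $(\phi\circ\psi)^{-1}[f+\mathcal{SD}]\in\mathcal{E}$ for every $f\in C[0,1]$, which is exactly the Haar-$\mathcal{E}$ property.

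The only genuinely technical step is the observation in the first stage that Hunt's exceptional set is not merely Lebesgue null but $F_\sigma$ of Lebesgue measure zero, and therefore $\sigma$-compactly Haar null; once that refinement is in hand, the transfer via $\psi$ is a routine measure-theoretic exercise.
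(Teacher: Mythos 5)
Your proposal is correct and follows essentially the same route as the paper, which likewise derives the result by noting that Hunt's probe $\phi\colon\mathbb{R}^2\to C[0,1]$ sends the closed sets $f+L_M$ to closed Lebesgue-null (hence $\sigma$-compact Haar-null) exceptional sets and then transfers to $\{0,1\}^{\mathbb{N}}$ via the dyadic map $\psi$. Your write-up actually supplies more detail than the paper's one-paragraph sketch (and correctly writes $\mathcal{E}(\mathbb{R}^2)$ where the paper has a slip reading $\mathcal{E}(\mathbb{R})$).
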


In this Section we show that $\mathcal{SD}$ is not Haar-countable in $C[0,1]$.

\begin{thm}
\label{tw1}
The set of somewhere differentiable functions is not Haar-countable in $C[0,1]$.
\end{thm}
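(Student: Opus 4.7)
Given a Cantor set $C \subseteq C[0,1]$, it suffices to exhibit some $h \in C[0,1]$ for which $(C+h) \cap \mathcal{SD}$ is uncountable, because any Borel hull $B \supseteq \mathcal{SD}$ satisfies $(C+h) \cap B \supseteq (C+h) \cap \mathcal{SD}$. The plan is to parametrize $C = \{f_\sigma\}_{\sigma \in \{0,1\}^\N}$ via a homeomorphism $F\colon \{0,1\}^\N \to C$ and to produce both $h$ and an uncountable subset $\Sigma \subseteq \{0,1\}^\N$ such that, for each $\sigma \in \Sigma$, the function $f_\sigma + h$ is differentiable at a specific point $x_\sigma \in [0,1]$ chosen in advance.

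The construction I would attempt is a ``diagonal'' one. Fix a Cantor set $X \subseteq [0,1]$ with a homeomorphism $\iota\colon \{0,1\}^\N \to X$, and set $x_\sigma = \iota(\sigma)$. Define $U\colon X \to \R$ by $U(x_\sigma) = f_\sigma(x_\sigma)$; this is continuous on $X$, so by Tietze's theorem it extends to a continuous function on $[0,1]$, and I would take $h = -U$ (possibly plus a smooth or quadratic correction of the type used below). Then $(f_\sigma + h)(x_\sigma) = 0$ for every $\sigma$, and for $y = x_\tau \in X$ the difference quotient at $x_\sigma$ becomes
\[
\frac{(f_\sigma + h)(x_\tau)}{x_\tau - x_\sigma} \;=\; \frac{f_\sigma(x_\tau) - f_\tau(x_\tau)}{x_\tau - x_\sigma}.
\]
The numerator tends to $0$ as $\tau \to \sigma$ by continuity of $F$, and so does the denominator; the question is whether the ratio converges, and the task reduces to arranging this for uncountably many $\sigma$.

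To make the rates match, I would perform a recursive refinement along the binary tree: at level $n$, select a clopen subtree of $\{0,1\}^\N$ and a compatible refinement of $X$ inside $[0,1]$ so that the diameters of the corresponding $F$-images shrink at the same rate as the widths of the matched pieces of $X$. In the limit this produces a Cantor subset $\Sigma$ along which the displayed quotient tends to $0$, yielding $f_\sigma + h$ differentiable at $x_\sigma$ with derivative $0$. As a sanity check, in the archetypal case $C = \{cT : c \in K\}$ with $T$ Takagi's function and $K \subseteq \R$ Cantor, this scheme specializes to the explicit choice $h = -\alpha T - T^2$: one verifies directly that $cT + h$ is differentiable at $x$ exactly when $c = \alpha + 2T(x)$, and choosing $\alpha$ so that the interval $[\alpha, \alpha + 2\max T]$ meets $K$ on an open subset of $K$ yields uncountably many good $c$'s.

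The main obstacle I anticipate lies precisely in the recursive step: ensuring the quotient converges (rather than merely staying bounded) for $\sigma$ ranging over an uncountable set. Naive equicontinuity of $C$ guarantees that the numerator is small, but does not automatically pin its rate of decay to that of the denominator; oscillation along the sequence $\tau \to \sigma$ can persist. Overcoming this likely requires either a diagonal selection of a sub-Cantor set where rate matching is genuinely uniform, or augmenting $h$ by a higher-order correction that absorbs the leading-order obstruction, in the spirit of the $-T^2$ term in the Takagi warm-up. Making one of these two strategies work uniformly across all Cantor sets $C$ is the core technical content of the theorem.
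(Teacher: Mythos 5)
Your high-level strategy is the same as the paper's: pass to a sub-Cantor set of the parameter space on which the map $\sigma\mapsto f_\sigma$ has a very strong modulus of continuity relative to a chosen Cantor set $X\subseteq[0,1]$ of prospective differentiability points, and build a single $h$ so that $f_\sigma+h$ has derivative $0$ at $x_\sigma$. The reduction to ``find $h$ with $(C+h)\cap\mathcal{SD}$ uncountable'' is fine, and your Takagi sanity check is correct (there $|T(y)-T(x)|^2=o(|y-x|)$ does the work). However, there is a genuine gap, and it is not the one you flag. Differentiability of $f_\sigma+h$ at $x_\sigma$ requires controlling the difference quotient for \emph{all} $y$ near $x_\sigma$, whereas your computation only treats $y=x_\tau\in X$. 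The complementary intervals of $X$ accumulate at $x_\sigma$, and an unspecified Tietze extension of $U$ gives no control there: $f_\sigma$ is typically nowhere differentiable, so on a gap adjacent to $x_\sigma$ the function $f_\sigma+h$ can oscillate by far more than $o(|y-x_\sigma|)$ no matter how well you have matched rates on $X$ itself. This is where the bulk of the paper's proof lives: $g$ is built as $\sum_n g_n$ with $g$ pinned to within $(x-d)^2$ of $f_d$ on an \emph{interval} neighborhood $U_d$ of each finite-expansion point $d$ of the Cantor set (with compatibility conditions where the $U_d$ overlap), and a separate covering argument shows that every $y$ near $c$ lies in some $U_d$ with $|y-c|\geq\frac{1}{4}|d-c|$, which converts the $(x-d)^2$ bounds into the required $o(|y-c|)$ estimate at every real $y$, not just at points of the Cantor set.

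The obstacle you do identify --- that matching rates only bounds the quotient rather than sending it to $0$ --- is real but is the easy half: since you may always pass to clopen pieces of $\{0,1\}^\N$ whose $\phi$-images have arbitrarily small diameter while keeping the target pieces of $X$ of prescribed width, you can enforce $\left\|f_\sigma-f_\tau\right\|\leq(x_\sigma-x_\tau)^2$ outright (this is exactly the paper's ``rather standard'' shrinking step), and no higher-order correction term to $h$ is needed. To complete your argument you would have to replace ``$h=-U$ for some Tietze extension $U$'' by an explicit inductive construction of the extension on the gaps of $X$, with quantitative two-sided bounds tied to the nearby $f_d$'s; that construction, together with the covering estimate, is the core technical content missing from the proposal.
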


\begin{proof}
Denote by $C$ the ternary Cantor set (which is homeomorphic to $\{0,1\}^\N$). Let $\phi\colon C\to C[0,1]$ be continuous. We need to find a homeomorphic copy $C'$ of $C$ and a continuous function $g\colon[0,1]\to\R$ such that $\phi(c)-g\in\mathcal{SD}$ for all $c\in C'$.

Define $D_1=\{0,1\}$ and $D_n=\left(\left\{\frac{i}{3^{n-1}}:\ i\in\N\right\}\cap C\right)$ for all $n\in\N$, $n\neq 1$. Let $E_1=D_1$ and $E_n=D_n\setminus D_{n-1}$, for $n\in\N$, $n>1$. Observe that $D=\bigcup_{n\in\omega}D_n$ is the set of all points from $C$ with finite ternary expansion. For each $n\in\N$ and $d\in E_n$, let: 
$$U_d=[0,1]\cap \left[d-\frac{\frac{1}{3^{n-1}}+\frac{1}{3^{n+1}}}{2},d+\frac{\frac{1}{3^{n-1}}+\frac{1}{3^{n+1}}}{2}\right].$$
Note that $U_d\cap D_n=\{d\}$, for every $d\in D_n$. 

First, we need to shrink $C$. Namely, we want to have a homeomorphic copy $C'\subseteq C$ of $C$ such that:
\begin{itemize} 
 \item $\left\|f_c-f_{c'}\right\|\leq (c-c')^2$, for all $c,c'\in C$;
 \item $\left\|f_d-f_{d'}\right\|<\inf_{x\in U_d\cap U_{d'}}\left((d-x)^2+(d'-x)^2\right)$, for all distinct $d,d'\in D$ with $U_d\cap U_{d'}\neq\emptyset$;
 \item $\left\|f_d-f_{d'}\right\|<\frac{1}{5}\left(\text{dist}\left(d,U_{d'}\right)\right)^2=\frac{1}{5}\left(\text{dist}\left(d',U_{d}\right)\right)^2$, whenever $d,d'\in E_n$ for some $n$, $d\neq d'$ and $U_d\cap U_{d'}\neq\emptyset$;
 \item $\left\|f_d-f_{d'}\right\|<\frac{1}{5}\left(\text{dist}\left(d,U_{d'}\right)\right)^2$, whenever $d'\in E_n$ and $d\in D_n\setminus E_n$ for some $n$ and $U_d\cap U_{d'}\neq\emptyset$;
\end{itemize}
where $f_x=\phi(\psi(x))$ and $\psi\colon C\to C'$ is the aforementioned homeomorphism. 

The construction of $C'$ is rather standard. Nevertheless, we provide a short sketch of it. 

Let $e\colon C^2\to\R$ be a continuous function such that $\left\|f_c-f_{c'}\right\|\leq e(c,c')$ guarantees all of the above conditions and $e(c,c')>0$ for all $c,c'\in C$ with $c\neq c'$ (which clearly exists). For a finite sequence $s=(s_1,\ldots,s_{n})\in\{0,1\}^{n}$, let $\overline{s}=\sum_{i=1}^{n}\frac{2s_i}{3^i}$. In the first inductive step pick any basic clopen set $W_{\emptyset}\subseteq C$ such that:
$$\text{diam}\left(\phi[W_{\emptyset}]\right)<\inf\left\{e(c,c'):\ c\in C\cap\left[0,\frac{1}{3}\right]\ \wedge\ c'\in C\cap\left[\frac{2}{3},1\right]\right\}.$$
If $W_s$ are already defined for all $s\in\{0,1\}^n$, for each $s=(s_1,\ldots,s_{n})\in\{0,1\}^{n}$ find two disjoint basic clopen sets $W_{s^\frown(0)},W_{s^\frown(1)}\subseteq W_s$ such that $\text{diam}\left(\phi[W_{s^\frown(i)}]\right)<\frac{1}{2^{n+1}}$ and:
$$\text{diam}\left(\phi[W_{s^\frown(i)}]\right)<\inf\left\{e(c,c'):\ c\in C\cap\left[\overline{s^\frown(i,0)},\overline{s^\frown(i,0)}+\frac{1}{3^{n+1}}\right]\ \wedge\right.$$
$$\left.\wedge\ c'\in C\cap\left[\overline{s^\frown(i,1)},\overline{s^\frown(i,1)}+\frac{1}{3^{n+1}}\right]\right\},$$
for $i=0,1$. It is easy to check that $C'=\bigcup_{x\in\{0,1\}^\N}\bigcap_{n\in\N}W_{x\upharpoonright\{1,\ldots,n\}}$ is the required set.

Now, we want to construct a $g\in C[0,1]$ such that $f_c-g$ has a derivative at $c$ equal to $0$, for all $c\in C$. Inductively, we will define a sequence of continuous functions $(g_n)\subseteq C[0,1]$ such that:
$$\forall_{d\in E_n}\ \forall_{x\in U_d}\ \left|f_d(x)-\sum_{i=1}^n g_i(x)\right|\leq\frac{1}{5}(x-d)^2;$$
$$\forall_{d\in D_n}\ \forall_{x\in U_d}\ \left|f_d(x)-\sum_{i=1}^n g_i(x)\right|\leq (x-d)^2.$$
At the end, we will put $g=\sum_{i=1}^\infty g_n$.

Start the construction with $g_1\in C[0,1]$ such that:
\begin{itemize}
    \item $g_1=f_0$ on $U_0\setminus U_1$;
    \item $g_1=f_1$ on $U_1\setminus U_0$;
    \item $g_1(x)$ is between $f_0(x)$ and $f_1(x)$ for all $x\in U_0\cap U_1$.
\end{itemize} 
Note that we have $\left|f_0(x)-g_1(x)\right|=0\leq\frac{1}{5}(x-0)^2$ for all $x\in U_1\setminus U_0$ and $\left|f_0(x)-g_1(x)\right|\leq\left\|f_0-f_1\right\|<\frac{1}{5}(\inf U_1-0)^2\leq\frac{1}{5}(x-0)^2$ for all $x\in U_0\cap U_1$. Similarly, $\left|f_1(x)-g_1(x)\right|\leq\frac{1}{5}(x-1)^2$ for all $x\in U_1$. Thus, $g_1$ is as needed.

Once all $g_i$'s, for $i<n$, are defined, let $\tilde{g}_n\colon\bigcup_{d\in E_n}U_d\to\mathbb{R}$ be a continuous function such that:
\begin{itemize}
    \item $\tilde{g}_{n}=f_d-\sum_{i=1}^{n-1}g_i$ on $U_d\setminus \bigcup_{d'\in E_n\setminus\{d\}}U_{d'}$ for each $d\in E_n$;
    \item if $d,d'\in E_n$ and $U_d\cap U_{d'}\neq\emptyset$, then $\tilde{g}_n(x)$ is between $f_d(x)-\sum_{i=1}^{n-1}g_i(x)$ and $f_{d'}(x)-\sum_{i=1}^{n-1}g_i(x)$ for all $x\in U_d\cap U_{d'}$.
\end{itemize}
Let $d\in E_n$ and notice that $\left|f_d(x)-\sum_{i=1}^{n-1} g_i(x)-\tilde{g}_n(x)\right|=0\leq\frac{1}{5}(x-d)^2$ for all $x\in U_d\setminus \bigcup_{d'\in E_n\setminus\{d\}}U_{d'}$ and $\left|f_d(x)-\sum_{i=1}^{n-1} g_i(x)-\tilde{g}_n(x)\right|\leq\left\|f_d-f_{d'}\right\|<\frac{1}{5}(\text{dist}(d,U_{d'}))^2\leq\frac{1}{5}(x-d)^2$ for all $x\in U_d\cap U_{d'}$ whenever the latter intersection is non-empty. Moreover, 
if $d\in D_n\setminus E_n$ and $x\in U_d\cap U_{d'}$ for some $d'\in E_n$, then:
$$|d'-x|\leq \frac{\frac{1}{3^{n-1}}+\frac{1}{3^{n+1}}}{2}\leq \frac{1}{3^{n-1}}-\frac{1}{3^{n+1}}=$$
$$2\left(\frac{1}{3^{n-1}}-\frac{\frac{1}{3^{n-1}}+\frac{1}{3^{n+1}}}{2}\right)\leq 2\text{dist}\left(d,U_{d'}\right)\leq 2|d-x|.$$
Therefore,
$$\left|f_d(x)-\sum_{i=1}^{n-1} g_i(x)-\tilde{g}_n(x)\right|\leq\left\|f_d-f_{d'}\right\|+\left|f_{d'}(x)-\sum_{i=1}^{n-1} g_i(x)-\tilde{g}_n(x)\right|\leq$$
$$\leq\frac{1}{5}\left(\text{dist}\left(d,U_{d'}\right)\right)^2+\frac{1}{5}(d'-x)^2\leq\frac{1}{5}(d-x)^2+\frac{4}{5}(d-x)^2=(d-x)^2.$$
Hence, $\tilde{g}_n$ satisfies all the required conditions and it suffices to extend it to a continuous function $g_n$ defined on the whole interval $[0,1]$ such that:
$$\left(f_d(x)-\sum_{i=1}^{n-1}g_i(x)-(d-x)^2\right)\leq g_n(x)\leq\left(f_d(x)-\sum_{i=1}^{n-1}g_i(x)+(d-x)^2\right),$$
for all $d\in D_n$ such that $x\in U_d$. Note that such extension exists. Indeed, we need to check that whenever $x\in U_d\cap U_{d'}$ for some $d,d'\in D_n\setminus E_n$ then 
$$f_d(x)-\sum_{i=1}^{n-1}g_i(x)-(d-x)^2\leq f_{d'}(x)-\sum_{i=1}^{n-1}g_i(x)+(d'-x)^2.$$
The above leads to $|f_d(x)-f_{d'}(x)|\leq (d-x)^2+(d'-x)^2$ which is satisfied as $\left\|f_d-f_{d'}\right\|<\inf_{x\in U_d\cap U_{d'}}\left((d-x)^2+(d'-x)^2\right)$, for all distinct $d,d'\in D$ with $U_d\cap U_{d'}\neq\emptyset$.

Once the construction is completed, define $g=\sum_{n=1}^\infty g_n$. For each $d\in D$ and $x\in U_d$, we have $|f_d(x)-g(x)|\leq (x-d)^2$. Thus, $f_d-g$ has a derivative at $d$ equal to $0$. Now, we want to show that $f_c-g$ has a derivative at $c$ equal to $0$ for each $c\in C$.

Fix $c\in C\setminus D$ and $h\in\R$ such that $c+h\in[0,1]$. Assume that $c+h\in U_d$ for some $d\in D$ and $|h|\geq\frac{1}{4}|c-d|$. Observe that:
$$\left|\frac{f_c(c+h)-g(c+h)-f_c(c)+g(c)}{h}\right|\leq$$
$$\leq\left|\frac{f_c(c+h)-f_d(c+h)}{h}\right|+\left|\frac{f_d(c+h)-g(c+h)-f_d(c)+g(c)}{h}\right|+\left|\frac{f_d(c)-f_c(c)}{h}\right|\leq $$
$$\leq 2\frac{\left\|f_c-f_d\right\|}{|h|}+\left|\frac{f_d(c+h)-g(c+h)-f_d(c)+g(c)}{h}\right|.$$
As $\left\|f_c-f_d\right\|\leq (c-d)^2$ and $|h|\geq\frac{1}{4}|c-d|$, we get that $\frac{\left\|f_c-f_d\right\|}{|h|}\leq 4|c-d|$. What is more, if $d'\in D$ is such that $c\in U_{d'}$ then 
$$|f_d(c)-g(c)|\leq|f_d(c)-f_{d'}(c)|+|f_{d'}(c)-g(c)|\leq(d-d')^2+(d'-c)^2\leq(d-c)^2.$$
Using $|h|\geq\frac{1}{4}|c-d|$ once again, we have:
$$\left|\frac{f_d(c+h)-g(c+h)-f_d(c)+g(c)}{h}\right|\leq\frac{(c+h-d)^2+(c-d)^2}{|h|}=$$
$$=\frac{2(c-d)^2+2h(c-d)+h^2}{|h|}\leq 8|c-d|+2|c-d|+|h|.$$
Thus, $\frac{f_c(c+h)-g(c+h)-f_c(c)+g(c)}{h}$ tends to $0$ as $h\to 0$ and $|c-d|\to 0$. Therefore, to finish the proof, it suffices to show that for each $c\in C$ there are sequences $(d_n)\subseteq D\cap (c,1]$ and $(d'_n)\subseteq D\cap [0,c)$ converging to $c$ and such that $(c,1]\subseteq\bigcup_{n\in\N}U_{d_n}\cap [c+\frac{1}{4}(d_n-c),1]$ and $[0,c)\subseteq\bigcup_{n\in\N}U_{d'_n}\cap [0,c-\frac{1}{4}(c-d'_n)]$.

We will construct the sequence $(d_n)$. Construction of $(d'_n)$ is similar. Let $(c_n)\in\{0,2\}^\N$ be the ternary expansion of $c\in C$ and $(i_n)\subseteq\N$ be the increasing enumeration of the set $\{n\in\N:\ c_n=0\}$. For all $k\in \N$, define $d_1=1$, $d_{2k}=1-\sum_{j=1}^{k-1}\frac{2}{3^{i_j}}-\frac{1}{3^{i_k}}$, and $d_{2k+1}=1-\sum_{j=1}^{k-1}\frac{2}{3^{i_j}}-\frac{2}{3^{i_k}}$. Now, we show that $(d_n)$ is as required. 

We need to show that $\sup U_{d_{n+1}}>c+\frac{1}{4}(d_{n}-c)$ for all $n\in\N$. Clearly, $\sup U_{d_{2k}}\geq d_{2k}+\frac{\frac{1}{3^{i_k}}+\frac{1}{3^{i_k+2}}}{2}$ and $\sup U_{d_{2k+1}}\geq d_{2k+1}+\frac{\frac{1}{3^{i_k}}+\frac{1}{3^{i_k+2}}}{2}$, i.e., it suffices to show: 
$$d_{2k}+\frac{\frac{1}{3^{i_k}}+\frac{1}{3^{i_k+2}}}{2}>c+\frac{1}{4}(d_{2k-1}-c);$$
$$d_{2k+1}+\frac{\frac{1}{3^{i_k}}+\frac{1}{3^{i_k+2}}}{2}>c+\frac{1}{4}(d_{2k}-c).$$
As $d_{2k}=c+(d_{2k}-c)$ and $d_{2k+1}=c+(d_{2k+1}-c)$, this is equivalent to:
$$d_{2k}-c+\frac{\frac{1}{2}+\frac{1}{18}}{3^{i_k}}>\frac{1}{4}(d_{2k-1}-c);$$
$$d_{2k+1}-c+\frac{\frac{1}{2}+\frac{1}{18}}{3^{i_k}}>\frac{1}{4}(d_{2k}-c).$$
Multiplying both sides by $4$ and using $d_{2k-1}=d_{2k}+\frac{1}{3^{i_k}}$ and $d_{2k}=d_{2k+1}+\frac{1}{3^{i_k}}$ we get:
$$4(d_{2k}-c)+\frac{2+\frac{2}{9}}{3^{i_k}}>d_{2k}-c+\frac{1}{3^{i_k}};$$
$$4(d_{2k+1}-c)+\frac{2+\frac{2}{9}}{3^{i_k}}>d_{2k+1}-c+\frac{1}{3^{i_k}}.$$
Thus,
$$3(d_{2k}-c)>\frac{1-2-\frac{2}{9}}{3^{i_k}};$$
$$3(d_{2k+1}-c)>\frac{1-2-\frac{2}{9}}{3^{i_k}};$$
which is true since $d_{2k}>c$ and $d_{2k+1}>c$. This finishes the entire proof.
\end{proof}

By the above, the set of functions differentiable at some point is not Haar-countable. However, what about functions differentiable at more than one point? As for a given $\sigma$-ideal $\I$ on $[0,1]$ the set $\{f\in C[0,1]:\ \emptyset\neq D(f)\in\I\}$ is contained in the set of somewhere differentiable functions, this question is natural. The following slight strengthening of Theorem \ref{tw1} gives only a partial answer to this problem. 

\begin{cor}
Let $\I$ be a $\sigma$-ideal on $[0,1]$ containing some perfect set. The set of functions $f\in C[0,1]$ such that $D(f)$ is a nonempty set which belongs to $\I$ is not Haar-countable.
\end{cor}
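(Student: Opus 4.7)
The plan is to repeat the construction from the proof of Theorem \ref{tw1}, but with the skeleton of differentiability anchors transferred into the prescribed perfect set $P\in\I$, and then to add a carefully chosen perturbation that prevents any derivative from appearing outside of $P$. Fix a perfect $P\in\I$; since $P$ is nonempty and perfect it contains a Cantor subset $K$, and we pick once and for all a homeomorphism $\sigma\colon C\to K$ from the ternary Cantor set onto $K$.

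In the first step I would rerun the proof of Theorem \ref{tw1} verbatim, except that every point $d\in C$ that serves as a differentiability anchor is replaced throughout by $\sigma(d)\in K$. Concretely, $D_n$ and $E_n$ are replaced by $\sigma(D_n)$ and $\sigma(E_n)$, and each neighborhood $U_d$ is redefined as an interval of comparable, geometrically shrinking radius centered at $\sigma(d)$ in $[0,1]$. Because $\sigma$ is a homeomorphism between Cantor sets, the nesting and separation properties that the original argument relies on survive, so the shrinking step and the inductive construction of $g_0=\sum g_n$ transfer essentially unchanged, and produce $C'\subseteq C$ homeomorphic to $C$ and $g_0\in C[0,1]$ such that $\phi(c)-g_0$ is differentiable at $\sigma(\psi^{-1}(c))\in K\subseteq P$ for every $c\in C'$. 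In particular $D(\phi(c)-g_0)$ is nonempty and meets $P$.

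The second step is to add to $g_0$ a perturbation $\tau\in C[0,1]$ designed so that $\tau$ is differentiable (with derivative $0$) at every point of $K$, while being sufficiently irregular on $[0,1]\setminus K$ that no continuous function $\phi(c)-g_0$ can cancel its non-differentiability there. A natural candidate is $\tau(x)=\text{dist}(x,K)^{2}T(x)$ with $T$ Takagi's function: the factor $\text{dist}(x,K)^{2}$ forces derivative $0$ at every point of $K$, while on each complementary gap of $K$ the function $\tau$ inherits the nowhere-differentiability of $T$. Setting $g:=g_0+\tau$, each $\phi(c)-g=(\phi(c)-g_0)-\tau$ remains differentiable at $\sigma(\psi^{-1}(c))\in K\subseteq P$, so $D(\phi(c)-g)$ is nonempty.

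The main obstacle is then to show $D(\phi(c)-g)\subseteq P$ (hence $\in\I$) for $c$ in a Cantor subset of $C'$. Differentiability of $\phi(c)-g$ at some $x\notin P$ would force the non-differentiability of $\tau$ at $x$ to be exactly cancelled by that of $\phi(c)-g_0$; a Hunt-style Haar-$\mathcal{E}$ argument on the range $[0,1]\setminus P$ (parallel to the first theorem of this section) shows that, for any fixed $c$, the set of perturbations $\tau$ producing such cancellation is Haar-small. Parametrising $\tau$ in a measure-theoretically convenient family (for instance by varying the underlying Takagi-like component via a continuous map from $\R^2$ into $C[0,1]$) and applying a Fubini-type argument, one extracts a single $\tau$ for which the exceptional set of $c\in C'$ is small enough that its complement still contains a homeomorphic copy of $C$; on this Cantor subset, $D(\phi(c)-g)\subseteq K\subseteq P$ and the corollary follows. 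The hard part is precisely this uniformisation across the uncountable family $c\in C'$: the pointwise-in-$c$ Haar-null bound is not by itself sufficient, and extracting a single $\tau$ that works on a Cantor set of $c$'s is the heart of the argument.
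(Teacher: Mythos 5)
There is a genuine gap, and it is exactly where you flag it: the uniformisation over $c$. For each fixed $c$, the set of perturbations $\tau$ for which $\phi(c)-g_0-\tau$ becomes differentiable somewhere off $K$ may indeed be Haar-null, but you need a \emph{single} $\tau$ that works simultaneously for an uncountable (Cantor) set of $c$'s, and Haar-null sets are not closed under uncountable unions. A Fubini-type argument would require a measure on the parameter space of $c$'s interacting correctly with a Borel "bad set" in the product, and nothing in your sketch produces that; the set of bad $c$ for a fixed $\tau$ need not even be small. So the heart of your argument, by your own admission, is missing, and the proposed route does not obviously close.

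The paper sidesteps this cancellation problem entirely by reversing the roles of $f_c$ and $g$. Instead of making $g$ wildly non-differentiable off the Cantor set and hoping no $\phi(c)-g_0$ cancels it, the paper first translates $\phi$: since the set of somewhere Lipschitz functions is Borel and Haar-null (Hunt), its complement $\mathcal{NL}$ is not Haar-null, so there is a $z\in C[0,1]$ with $\phi^{-1}[\mathcal{NL}-z]$ non-null, hence containing a copy of the Cantor set; running the construction of $C'$ inside that copy and setting $f_x=\phi(\psi(x))+z$ guarantees every $f_c$ is \emph{nowhere} differentiable. Then $g$ is built, via an extra inductive requirement ($g_n\upharpoonright I_{n-1}=0$ on an exhaustion of each gap by closed intervals), to be \emph{differentiable} at every point outside the Cantor set. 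Now $D(f_c-g)\subseteq C$ is automatic: if $f_c-g$ were differentiable at $x\notin C$, then $f_c=(f_c-g)+g$ would be too, contradicting nowhere differentiability of $f_c$. This single translation trick replaces your entire second step and removes the need for any uniformisation. Your first step (transplanting the anchors into a Cantor subset of the given perfect set in $\I$) does match the paper's treatment of the case $C\notin\I$.
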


\begin{proof}
First, assume that the ternary Cantor set $C$ belongs to $\I$. We need to make two modifications of the proof of Theorem \ref{tw1}.

Since, by Hunt's result, the set of somewhere Lipschitz functions is Borel and Haar-null (see \cite{Hunt}), the set $\mathcal{NL}$ of nowhere Lipschitz functions cannot be Haar-null. Thus, for $\phi$ from the proof of Theorem \ref{tw1}, there is a $z\in C[0,1]$ such that $\phi^{-1}[\mathcal{NL}-z]$ is not Lebesgue's null. In particular, this is a Borel uncountable subset of $C$. Hence, it must contain a homeomorphic copy $P$ of $C$. Then, $\phi(c)+z$ is nowhere differentiable for each $c\in P$. Thus, by performing the construction of $C'$ inside $P$ and defining $f_x=\phi(\psi(x))+z$ where $\psi$ is a homeomorphism from $C$ to $C'$, we may assume that $f_c$ is nowhere differentiable for each $c\in C$. Moreover, this changes do not affect the rest of the proof. If we find $g\in C[0,1]$ such that $f_c-g$ is somewhere differentiable for all $c\in C$, then $\phi(c)+(z-g)$ is somewhere differentiable for uncountably many $c\in C$ (namely, for all $c\in C'$).

Now, we move to the second modification. We can ensure that $g$ is differentiable outside $C$. Indeed, for each connected component $I$ of $[0,1]\setminus C$ fix a sequence of closed (possibly empty) intervals $(I_n)$ such that $\bigcup_n I_n=I$, $I_n\subseteq I_{n+1}$ and $I_n\cap \bigcup_{d\in E_n} U_d=\emptyset$ for all $n$ (which is possible as $\bigcup_n E_n\subseteq C$ and $\sup_{d\in E_n}\text{diam}(U_d)$ tends to $0$ as $n\to+\infty$). Now it suffices to require, additionally, in the inductive construction of $(g_n)$, that $\sum_{i=1}^n g_i$ is differentiable on $I_n$ and $g_n\upharpoonright I_{n-1}=0$ (if $n>1$). This can be done as for $x\in I_n$ the only requirement imposed on $g_n$ resulting from the proof of Theorem \ref{tw1} is:
$$\sup_{d\in D_n, x\in U_d}\left(f_d(x)-(d-x)^2\right)\leq\sum_{i=1}^{n}g_i(x)\leq\inf_{d\in D_n, x\in U_d}\left(f_d(x)+(d-x)^2\right).$$
Thus, there is no problem with the request $g_n\upharpoonright I_{n-1}=0$ as $\left(\bigcup_{d\in D_{n}}U_d\right)\cap I_{n-1}\subseteq\left(\bigcup_{d\in D_{n}}U_d\right)\cap I_{n}\subseteq\bigcup_{d\in D_{n-1}}U_d$ and in the $n$th induction step we already have:
$$\sup_{d\in D_{n-1}, x\in U_d}\left(f_d(x)-(d-x)^2\right)\leq\sum_{i=1}^{n-1}g_i(x)\leq\inf_{d\in D_{n-1}, x\in U_d}\left(f_d(x)+(d-x)^2\right).$$
What is more, $\sum_{i=1}^{n-1}g_i(x)$ is differentiable on $I_{n-1}$, hence so is $\sum_{i=1}^{n}g_i(x)$. Since 
$$\sup_{d\in D_n, x\in U_d}\left(f_d(x)-(d-x)^2\right)<\inf_{d\in D_n, x\in U_d}\left(f_d(x)+(d-x)^2\right)$$
(by the fact that $\left\|f_d-f_{d'}\right\|<\inf_{x\in U_d\cap U_{d'}}\left((d-x)^2+(d'-x)^2\right)$, for all distinct $d,d'\in D$ with $U_d\cap U_{d'}\neq\emptyset$), there is some freedom in the choice of $g_n$ and we can ensure that $\sum_{i=1}^n g_i$ is differentiable also on $I_n\setminus I_{n-1}$.

After these modifications, as $g$ is differentiable at each point $x\in[0,1]\setminus C$ while $f_c$ is not, $f_c-g$ is not differentiable at each point of $[0,1]\setminus C$ and we get that $D(f_c-g)\subseteq C$ for all $c\in C$.

The case where $C\notin\I$ requires one additional modification. Since every perfect set contains a homeomorphic copy of the ternary Cantor set $C$, we simply need to find such a copy $R$ that belongs to $\I$. Then, we can replace $C$ with $R$, modify sets $D_n$ and $U_d$, and perform similar reasoning as in the proof of Theorem \ref{tw1}. 
\end{proof}

It is known that $D(f)$ is Borel (of type $\mathtt{G_{\delta\sigma}}$) for each $f\in C[0,1]$ (see \cite{Zahorski}). Thus, we can consider Lebesgue's measure of the set $D(f)$. Since there are perfect sets of Lebesgue's measure zero, the following is immediate.

\begin{cor}
The set of functions $f\in C[0,1]$ such that $D(f)$ is a nonempty set of Lebesgue's measure zero is not Haar-countable.
\end{cor}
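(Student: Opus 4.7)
The plan is to apply the previous corollary directly, specializing the abstract $\sigma$-ideal $\I$ to the concrete $\sigma$-ideal of Lebesgue null subsets of $[0,1]$. So first I would let $\I=\mathcal{N}([0,1])$ denote the family of Lebesgue null subsets of $[0,1]$; this is a well-known $\sigma$-ideal on $[0,1]$.

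Next, I would verify the hypothesis of the previous corollary, namely that $\I$ contains some perfect set. As noted in the paragraph introducing the corollary, this is exactly what the parenthetical comment about ``perfect sets of Lebesgue's measure zero'' is for: the standard fat/thin Cantor-type construction produces a perfect set of Lebesgue measure zero (for instance, the ternary Cantor set $C$ itself is perfect and has Lebesgue measure zero). Thus $\mathcal{N}([0,1])$ satisfies the hypothesis.

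Applying the previous corollary with this choice of $\I$, the set of $f\in C[0,1]$ such that $D(f)$ is a nonempty element of $\I$, which is precisely the set of $f\in C[0,1]$ such that $D(f)$ is a nonempty set of Lebesgue measure zero, fails to be Haar-countable.

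There is essentially no obstacle here: the content of the statement is packaged into the previous corollary, and this result is genuinely a one-line specialization. The only thing to double-check is that the ambient $\sigma$-ideal in the previous corollary is interpreted over $[0,1]$ (which matches $D(f)\subseteq[0,1]$) and that ``perfect set'' in the hypothesis is used just to guarantee a homeomorphic copy of the Cantor set inside $\I$, which $\mathcal{N}([0,1])$ plainly provides.
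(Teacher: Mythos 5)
Your proposal is correct and is exactly the paper's argument: the paper derives this corollary immediately from the preceding one by taking $\I$ to be the $\sigma$-ideal of Lebesgue null subsets of $[0,1]$, noting that it contains a perfect set (e.g.\ the ternary Cantor set). Nothing further is needed.
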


As the $\sigma$-ideal of countable sets does not contain any perfect set, the following question arises.

\begin{problem}
Is the set of functions $f\in C[0,1]$ such that $D(f)$ is countable but non-empty Haar-countable in $C[0,1]$? 
\end{problem}

\section{Differentiability and Lebesgue's measure}

In this Section, we examine functions differentiable on a set of positive Lebesgue's measure. 

We will need the following notation. By the symbol $\lambda$ we will denote the Lebesgue's measure on $[0,1]$. Moreover, for a function $f\in C[0,1]$ and $M\in\mathbb{N}$, define 
$$L_M(f)=\left\{x\in[0,1]:\ \forall_{y\in[0,1]}\ |f(x)-f(y)|\leq M|x-y|\right\}.$$
Then, $f$ is somewhere Lipschitz if and only if the set $L(f)=\bigcup_{M\in\mathbb{N}}L_M(f)$ is non-empty. 

The next two rather folklore lemmas will be useful in our further considerations.

\begin{lem}
\label{lem3}
For any $f\in C[0,1]$ and $M\in\mathbb{N}$, the set $L_M(f)$ is closed. 
\end{lem}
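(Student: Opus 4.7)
The plan is to show that $L_M(f)$ is closed by exhibiting it as an intersection of closed sets, or equivalently by a direct sequential limit argument. Either approach just uses continuity of $f$ and of the absolute value.

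Concretely, for each fixed $y \in [0,1]$, define $\varphi_y \colon [0,1] \to \R$ by $\varphi_y(x) = |f(x) - f(y)| - M|x - y|$. Since $f$ is continuous on $[0,1]$ and the absolute value is continuous, $\varphi_y$ is continuous, so the set $F_y = \{x \in [0,1] : \varphi_y(x) \leq 0\} = \varphi_y^{-1}[(-\infty,0]]$ is closed in $[0,1]$. Now observe that
$$L_M(f) = \bigcap_{y \in [0,1]} F_y,$$
because membership of $x$ in $L_M(f)$ is precisely the condition that $\varphi_y(x) \leq 0$ for every $y \in [0,1]$. An arbitrary intersection of closed sets is closed, so $L_M(f)$ is closed.

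For a sanity check, one can verify this directly: if $(x_n) \subseteq L_M(f)$ converges to some $x \in [0,1]$, then for any $y \in [0,1]$ we have $|f(x_n) - f(y)| \leq M|x_n - y|$; passing to the limit and using continuity of $f$ at $x$, we obtain $|f(x) - f(y)| \leq M|x - y|$, so $x \in L_M(f)$.

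There is no real obstacle here; the only thing to be slightly careful about is that the inequality defining $L_M(f)$ is non-strict, which is exactly what makes each $F_y$ closed rather than merely $G_\delta$. The statement would fail (even modulo the closure) if the inequality were strict.
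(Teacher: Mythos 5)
Your proof is correct and rests on the same underlying fact as the paper's: for each fixed $y$ the condition $|f(x)-f(y)|\leq M|x-y|$ is closed in $x$ by continuity, and the paper's $\epsilon$--$\delta$ argument is just the explicit verification that the complement of your $F_y$ is open. Your packaging as $L_M(f)=\bigcap_{y\in[0,1]}F_y$ is a cleaner way to say the same thing, so there is nothing to add.
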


\begin{proof}
We will show that $[0,1]\setminus L_M(f)$ is open. Fix any $x\in[0,1]\setminus L_M(f)$. Then, there is $y\in[0,1]$ such that $|f(x)-f(y)|>M|x-y|$. Find $\alpha>0$ such that $|f(x)-f(y)|>M(|x-y|+\alpha)$. By continuity of $f$ at $x$, there is a $\delta>0$ such that 
$$|f(x)-f(z)|<|f(x)-f(y)|-M(|x-y|+\alpha)$$
whenever $|x-z|<\delta$. Then, for each $z\in[0,1]$ such that $|x-z|<\min\{\delta,\alpha\}$, we have:
$$|f(z)-f(y)|\geq||f(x)-f(y)|-|f(x)-f(z)||>M(|x-y|+\alpha)>M|y-z|.$$ 
Hence, $z\notin L_M(f)$. 
\end{proof}

This result implies that $L(f)$ is Borel (of type $\mathtt{F_\sigma}$). Thus, we can consider Lebesgue's measure of the sets $L_M(f)$ and $L(f)$.

\begin{lem}
\label{lem1}
For each $M\in\mathbb{N}$ and $c\in (0,1]$, the set of functions $f\in C[0,1]$ such that $\lambda(L_M(f))\geq c$ is closed in $C[0,1]$.
\end{lem}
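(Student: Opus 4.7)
The plan is to establish an upper-semicontinuity property of the set-valued map $f\mapsto L_M(f)$ and then combine it with the reverse Fatou lemma for sets on the finite measure space $([0,1],\lambda)$. Suppose $(f_n)\subseteq C[0,1]$ satisfies $\lambda(L_M(f_n))\geq c$ for every $n$ and $f_n\to f$ uniformly; the goal is to deduce $\lambda(L_M(f))\geq c$.

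First I would verify the key observation: if $x\in L_M(f_n)$ for infinitely many $n$, then $x\in L_M(f)$. This is immediate from the definition, since along a subsequence $(n_k)$ on which $x\in L_M(f_{n_k})$ we have $|f_{n_k}(x)-f_{n_k}(y)|\leq M|x-y|$ for every $y\in[0,1]$, and uniform convergence gives $f_{n_k}(x)\to f(x)$ and $f_{n_k}(y)\to f(y)$, whence the Lipschitz bound is inherited by $f$.

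Then I would apply reverse Fatou. Put $A_n=L_M(f_n)$ and $B_N=\bigcup_{n\geq N}A_n$. By Lemma \ref{lem3} each $A_n$ is closed, so each $B_N$ is $\mathtt{F_\sigma}$ and hence Borel. The sequence $(B_N)$ is decreasing and satisfies $\lambda(B_N)\geq\lambda(A_N)\geq c$, so continuity of the finite measure $\lambda$ from above yields $\lambda\bigl(\bigcap_N B_N\bigr)=\lim_N\lambda(B_N)\geq c$. On the other hand, any $x\in\bigcap_N B_N$ lies in $A_n$ for infinitely many $n$, so by the observation above $\bigcap_N B_N\subseteq L_M(f)$. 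Monotonicity of $\lambda$ then gives $\lambda(L_M(f))\geq c$, proving that the sublevel set is closed.

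The only point requiring care is recognising that the set-theoretic $\limsup A_n=\bigcap_N\bigcup_{n\geq N}A_n$ is the correct object: it is simultaneously measurable (thanks to Lemma \ref{lem3}), large enough to carry the measure lower bound coming from reverse Fatou, and small enough to be contained in $L_M(f)$ via the pointwise limit-of-inequalities argument. Beyond this packaging there is no genuine obstacle.
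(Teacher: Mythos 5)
Your proof is correct, but it takes a genuinely different route from the paper's. The paper argues by contradiction: assuming $\lambda(L_M(f))<c$, it uses regularity of $\lambda$ to find an open hull $G\supseteq L_M(f)$ with $\lambda(G)<c$, picks witnesses $x_n\in L_M(f_n)\setminus G$, extracts a convergent subsequence $x_n\to x$ by compactness of $[0,1]$, and then uses equicontinuity of $(f_n)$ (coming from uniform convergence) to show that this \emph{moving} limit point $x$ lands in $L_M(f)$, contradicting $x\notin G$. You instead work with a \emph{fixed} point: the observation that $x\in L_M(f_{n_k})$ for a subsequence implies $x\in L_M(f)$ needs only pointwise convergence, and you then transfer the measure bound via $\limsup_n L_M(f_n)=\bigcap_N\bigcup_{n\geq N}L_M(f_n)$ and continuity of the finite measure $\lambda$ from above. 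Your version dispenses with regularity, compactness of the domain, and equicontinuity, so it is more elementary and would survive under weaker hypotheses (pointwise convergence of $(f_n)$, any finite Borel measure in place of $\lambda$); the paper's argument, on the other hand, establishes the stronger upper-semicontinuity fact that limits of points of $L_M(f_n)$ belong to $L_M(f)$, which is a reusable property of the set-valued map $f\mapsto L_M(f)$. Both are complete; measurability of all sets involved is covered in your write-up by Lemma \ref{lem3}, exactly as needed.
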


\begin{proof}
Fix a sequence $(f_n)\subseteq C[0,1]$ converging to some $f\in C[0,1]$ and such that $\lambda(L_M(f_n))\geq c$ for each $n$. We need to show that $\lambda(L_M(f))\geq c$. Suppose, to the contrary, that $\lambda(L_M(f))<c$. Using regularity of Lebesgue's measure, find an open set $G\subseteq[0,1]$ such that $L_M(f)\subseteq G$ and $\lambda(G)<c$.

For each $n$, there is an $x_n\in L_M(f_n)\setminus G$ (as $\lambda(G)<\lambda(L_M(f_n))$). Since $[0,1]$ is compact, without loss of generality we may assume that $(x_n)$ converges to some $x\in[0,1]$. Observe that $x\notin L_M(f)$ as $G$ is an open hull of $L_M(f)$ and whole sequence $(x_n)$ is outside of $G$. However, 
$$|f(x)-f(y)|\leq |f(x)-f_n(x)|+|f_n(x)-f_n(x_n)|+|f_n(x_n)-f_n(y)|+|f_n(y)-f(y)|\leq$$
$$\left\|f-f_n\right\|+|f_n(x)-f_n(x_n)|+M|x_n-y|+\left\|f-f_n\right\|$$
for each $y\in[0,1]$. Convergence of $(f_n)$ to $f$ implies equicontinuity of $(f_n)$ at $x$. So, if $n$ tends to infinity, we get that $|f(x)-f(y)|\leq M|x-y|$. This contradicts $x\notin L_M(f)$. 
\end{proof}

First, we want to focus on functions differentiable almost everywhere. 

\begin{prop}
\label{ae}
The set of functions differentiable almost everywhere is Haar-$1$ in $C[0,1]$.
\end{prop}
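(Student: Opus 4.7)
The approach is to realise Haar-$1$ via a Cantor copy $C\subseteq C[0,1]$ consisting of scalar multiples of a single nowhere Lipschitz function, together with the Borel hull $B=\{f\in C[0,1]:\lambda(L(f))=1\}$ of the set $A$ of a.e.-differentiable functions. It then suffices to check that for every $h\in C[0,1]$ at most one $c\in C$ satisfies $c+h\in B$.

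I first verify that $B$ is a Borel hull of $A$. Since $L_M(f)\subseteq L_{M+1}(f)$, the condition $\lambda(L(f))=1$ is equivalent to: for every $n\in\N$ there is some $M\in\N$ with $\lambda(L_M(f))\geq 1-\tfrac{1}{n}$; by Lemma~\ref{lem1} this presents $B$ as an $F_{\sigma\delta}$ set, hence Borel. For the inclusion $A\subseteq B$, observe that if $f$ is differentiable at $x$ then $f$ is locally Lipschitz at $x$, and the boundedness of $f$ on the compact interval $[0,1]$ upgrades this to a global bound $|f(x)-f(y)|\leq M|x-y|$ valid for all $y\in[0,1]$; thus $D(f)\subseteq L(f)$, so that a.e.~differentiability forces $\lambda(L(f))=1$.

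Next I build the Cantor set. Pick any $f_0\in C[0,1]$ with $L(f_0)=\emptyset$; such an $f_0$ exists because by Banach's theorem the somewhere Lipschitz functions form a meager subset of $C[0,1]$. Let $K\subseteq[0,1]$ be the ternary Cantor set; since $f_0\neq 0$, the map $t\mapsto tf_0$ is a homeomorphism onto $C:=\{tf_0:t\in K\}$, which is therefore a topological copy of $\{0,1\}^\N$ inside $C[0,1]$.

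It remains to check the intersection condition. Suppose $tf_0+h,\,sf_0+h\in B$ for some distinct $t,s\in K$. Then the full-measure sets $L(tf_0+h)$ and $L(sf_0+h)$ meet in a set of full measure; adding the two Lipschitz-at-a-point inequalities that define these sets shows that every $x$ in the intersection belongs to $L((t-s)f_0)$, whence $\lambda(L((t-s)f_0))=1$. But rescaling by the nonzero constant $t-s$ merely rescales Lipschitz constants and leaves $L$ unchanged, so $L((t-s)f_0)=L(f_0)=\emptyset$, a contradiction. The only mildly delicate point is engineering $B$ so that it is simultaneously Borel, a hull of $A$, and closed under the ``subtraction calculus'' used in the last step; the measure-one-of-$L$ formulation threads this needle via Lemma~\ref{lem1} and the additivity of Lipschitz bounds at a point.
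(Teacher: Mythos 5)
Your proof is correct, and while it shares the paper's key computation, it finishes by a genuinely different route. Both arguments use the same Borel hull $B=\{f:\lambda(L(f))=1\}$ (Borel via Lemma~\ref{lem1}) and the same central observation: if $g,h\in B$ then $L(g)\cap L(h)$ is nonempty, and adding the two pointwise Lipschitz estimates shows $g-h$ is somewhere Lipschitz. The paper stops there: it concludes that $B-B$ is contained in the somewhere Lipschitz functions, hence meager by Banach's theorem, and then invokes the external characterization of Banakh et al.\ (a Borel set is Haar-$1$ iff its difference set is meager). You instead exhibit the witnessing Cantor copy explicitly, as $\{tf_0:t\in K\}$ for a fixed nowhere Lipschitz $f_0$ and $K$ the ternary Cantor set, and verify directly that $(C+h)\cap B$ is at most a singleton because $(t-s)f_0$ would otherwise be somewhere Lipschitz while $L(cf_0)=L(f_0)=\emptyset$ for $c\neq 0$. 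What your version buys is self-containedness: you avoid the black-box corollary from \cite{1} at the cost of needing a concrete nowhere Lipschitz function (whose existence you correctly extract from Banach's category argument) and the small rescaling observation that $L$ is invariant under nonzero scalar multiplication. What the paper's version buys is brevity and reusability: the ``$B-B$ meager'' criterion immediately transfers to the subsequent corollary about $\lambda(D(f))>\tfrac12$ without re-running a construction. All the individual steps you give (the $F_{\sigma\delta}$ presentation of $B$, the upgrade from local to global Lipschitz bounds on the compact interval, the homeomorphism $t\mapsto tf_0$, and the subtraction at a common point of the two full-measure sets) check out.
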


\begin{proof}
Let $\mathcal{A}$ denote the set of functions $f\in C[0,1]$ such that $\lambda(L(f))=1$. Note that each function differentiable almost everywhere is in $\mathcal{A}$. Thus, if we show that $\mathcal{A}$ is Haar-$1$ in $C[0,1]$, the thesis will follow. 

Recall that $L(f)=\bigcup_{M\in\mathbb{N}}L_M(f)$ and $L_M(f)\subseteq L_{M+1}(f)$. Thus, $\lambda(L(f))=\lim_{M\to\infty}\lambda(L_M(f))$. Consequently, 
$$\mathcal{A}=\bigcap_{k\in\mathbb{N}}\bigcup_{M\in\mathbb{N}}\left\{f\in C[0,1]:\ \lambda(L_M(f))\geq\frac{k-1}{k}\right\}$$
and $\mathcal{A}$ is Borel by Lemma \ref{lem1}.

Observe that $\mathcal{A}-\mathcal{A}$ is meager as a subset of the set of somewhere Lipschitz functions, which is meager by the Banach theorem \cite{Banach} (see also the proof of Proposition \ref{Banach2dim}). Indeed, if $f\in\mathcal{A}-\mathcal{A}$ then there are $g,h\in\mathcal{A}$ such that $f=g-h$. Since $\lambda(L(g))=1=\lambda(L(h))$, there is $x\in\lambda(L(g))\cap\lambda(L(h))$. Thus, there are $M_1,M_2\in\mathbb{N}$ such that $|g(x)-g(y)|\leq M_1|x-y|$ and $|h(x)-h(y)|\leq M_2|x-y|$ for all $y\in[0,1]$. Thus, for each $y\in[0,1]$ we have:
\[|f(x)-f(y)|\leq|g(x)-g(y)|+|h(x)-h(y)|\leq (M_1+M_2)|x-y|.\]

The thesis follows now from \cite[Corollary 16.3]{1}, which states that a Borel set $B$ is Haar-$1$ if and only if $B-B$ is meager.
\end{proof}

Actually, this reasoning gives us something more.

\begin{cor}
The set of functions $f\in C[0,1]$ such that $\lambda(D(f))>\frac{1}{2}$ is Haar-$1$ in $C[0,1]$.
\end{cor}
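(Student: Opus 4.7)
The plan is to mimic the argument for Proposition \ref{ae} verbatim, replacing the threshold $\lambda(L(f))=1$ by $\lambda(L(f))>\tfrac12$. Define
$$\mathcal{B}=\left\{f\in C[0,1]:\ \lambda(L(f))>\tfrac12\right\}.$$
My first step would be to check that $\{f\in C[0,1]:\ \lambda(D(f))>\tfrac12\}\subseteq\mathcal{B}$. This uses the standard observation that differentiability of $f$ at $x$ yields a \emph{global} Lipschitz bound from $x$: locally from the definition of the derivative, and outside a fixed neighbourhood of $x$ from compactness of $[0,1]$ together with continuity of $f$. Hence $D(f)\subseteq L(f)$ for every $f\in C[0,1]$, and so our target set sits inside $\mathcal{B}$.

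Next I would show that $\mathcal{B}$ is Borel. Since $L(f)=\bigcup_{M}L_M(f)$ with $L_M(f)\subseteq L_{M+1}(f)$, we have $\lambda(L(f))>\tfrac12$ if and only if there exist $k,M\in\N$ with $\lambda(L_M(f))\geq\tfrac12+\tfrac1k$. By Lemma \ref{lem1} each of these conditions defines a closed set, so
$$\mathcal{B}=\bigcup_{k\in\N}\bigcup_{M\in\N}\left\{f\in C[0,1]:\ \lambda(L_M(f))\geq\tfrac12+\tfrac1k\right\}$$
is $F_\sigma$, in particular Borel.

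Then I would argue that $\mathcal{B}-\mathcal{B}$ is meager. If $f=g-h$ with $g,h\in\mathcal{B}$, then $\lambda(L(g))+\lambda(L(h))>1$, so $L(g)\cap L(h)\neq\emptyset$. Any common point $x$ satisfies $x\in L_{M_1}(g)\cap L_{M_2}(h)$ for some $M_1,M_2\in\N$, and by the triangle inequality $x\in L_{M_1+M_2}(f)$. Therefore $\mathcal{B}-\mathcal{B}$ is contained in the set of somewhere Lipschitz functions, which is meager by Banach's theorem \cite{Banach}. Applying \cite[Corollary 16.3]{1} to the Borel set $\mathcal{B}$ concludes that $\mathcal{B}$ is Haar-$1$, and since Haar-$1$ sets form a semi-ideal, the subset $\{f\in C[0,1]:\ \lambda(D(f))>\tfrac12\}$ is Haar-$1$ as well.

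There is no substantial obstacle here; the proof is essentially a parameter-swap in the argument of Proposition \ref{ae}, the only point requiring a brief justification being the inclusion $D(f)\subseteq L(f)$ used in the first step. Note that the threshold $\tfrac12$ is precisely what makes the pigeonhole step $\lambda(L(g))+\lambda(L(h))>1$ go through, which is why the statement is restricted to this value.
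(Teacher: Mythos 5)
Your proposal is correct and follows essentially the same route as the paper: reduce to the Borel set $\{f:\lambda(L(f))>\tfrac12\}$, use the measure-theoretic pigeonhole to place its difference set inside the (meager) somewhere Lipschitz functions, and invoke \cite[Corollary 16.3]{1}. The only addition is your explicit verification of $D(f)\subseteq L(f)$, which the paper leaves implicit.
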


\begin{proof}
Let $\mathcal{A}$ denote the set of functions $f\in C[0,1]$ such that $\lambda(L(f))>\frac{1}{2}$. Similarly as in the proof of Proposition \ref{ae}, one can show that $\mathcal{A}$ is Borel. If we show that $\mathcal{A}$ is Haar-$1$ in $C[0,1]$, the thesis will follow. 

We will show that $\mathcal{A}-\mathcal{A}$ is a subset of the set of somewhere Lipschitz functions. Indeed, if $f\in\mathcal{A}-\mathcal{A}$ then there are $g,h\in\mathcal{A}$ such that $f=g-h$. Since $\lambda(L(g))>\frac{1}{2}$ and $\lambda(L(h))>\frac{1}{2}$, there is $x\in\lambda(L(g))\cap\lambda(L(h))$. Proceeding in the same way as in the proof of Proposition \ref{ae}, we can conclude that $f$ is Lipschitz in $x$.

From now on the proof is entirely the same as the proof of Proposition \ref{ae}.
\end{proof}

Now, we will study functions $f\in C[0,1]$ such that $\lambda(D(f))\in(0,1)$.

\begin{prop}
\label{Positive1}
Let $\I$ be a $\sigma$-ideal on $[0,1]$ containing no interval. The set of functions $f\in C[0,1]$ such that neither $D(f)$ nor $[0,1]\setminus D(f)$ belongs to $\I$ is not Haar-finite in $C[0,1]$.
\end{prop}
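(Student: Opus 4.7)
The plan is to negate Haar-finiteness directly: given any continuous $\phi\colon\{0,1\}^\N\to C[0,1]$ and any Borel hull $B$ of the set $\mathcal S$ from the statement, I will produce $h\in C[0,1]$ and infinitely many pairwise distinct $c_n\in\{0,1\}^\N$ with $\phi(c_n)-h\in\mathcal S\subseteq B$. This forces $\phi^{-1}[B+h]$ to be infinite, which is exactly the failure of Haar-finiteness.

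The construction uses two interlaced families of disjoint open intervals as localised witnesses. Pick $c_n\to c^*$ in $\{0,1\}^\N$ with the $c_n$'s pairwise distinct (possible because $\{0,1\}^\N$ is perfect), and pick pairwise disjoint nonempty open intervals $I_n,J_n\subseteq(0,1)$ all accumulating at $0$. Let $T_n$ be a rescaled Takagi function supported on $\overline{J_n}$, vanishing at $\partial J_n$, nowhere differentiable on $J_n$, with $\|T_n\|_\infty\le 2^{-n}$. Define $h=\phi(c_n)$ on $\overline{I_n}$, $h=\phi(c_n)-T_n$ on $\overline{J_n}$, $h(0)=\phi(c^*)(0)$, and extend over each remaining "buffer" subinterval by linear interpolation between the prescribed boundary values.

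For each $n$ the function $\phi(c_n)-h$ is identically $0$ on $I_n$, so $D(\phi(c_n)-h)\supseteq I_n$, and equals $T_n$ on $J_n$, so $[0,1]\setminus D(\phi(c_n)-h)\supseteq J_n$. Since $\I$ contains no interval, both sets then lie outside $\I$, and hence $\phi(c_n)-h\in\mathcal S$ for every $n$. The case of constant $\phi\equiv f_0$ is immediate: fix any $p\in\mathcal S$ (for instance, a linear function on $[0,1/2]$ glued continuously to a Takagi function on $[1/2,1]$, using that $\I$ contains no interval, so that $D(p)\supseteq[0,1/2)\notin\I$ and $[0,1]\setminus D(p)\supseteq(1/2,1]\notin\I$) and take $h=f_0-p$; then $\phi(c)-h=p$ for all $c$.

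The one technical point is continuity of $h$ at the accumulation point $0$; elsewhere continuity is automatic from the pieces and from linear interpolation. The requirement that $T_n$ vanish at $\partial J_n$ lets the buffer interpolation bridge only values of $\phi(c_n)$ at nearby points, and a standard estimate combining $\|\phi(c_n)-\phi(c^*)\|_\infty\to 0$ (from continuity of $\phi$), $\|T_n\|_\infty\to 0$, and continuity of $\phi(c^*)$ at $0$ gives $h(x)\to\phi(c^*)(0)$ as $x\to 0^+$. Arranging the scales of $I_n$, $J_n$, and of the buffers so all three of these contributions are simultaneously small is the only place that requires real (but routine) care.
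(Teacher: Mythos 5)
Your proof is correct and follows essentially the same route as the paper's: an $h$ assembled from the $\phi(c_n)$ on one family of intervals and from $\phi(c_n)$ minus a small rescaled nowhere differentiable function on an interlaced family, with linear buffers and with continuity checked only at the accumulation point, so that each $\phi(c_n)-h$ is constant on one interval and nowhere differentiable on another, and both $D(\phi(c_n)-h)$ and its complement contain intervals and hence avoid $\I$. The only (cosmetic) difference is that you index the construction by a convergent sequence of distinct points $c_n\to c^*$ in $\{0,1\}^\N$ and use continuity of $\phi$, whereas the paper takes an injective convergent sequence in the image $\phi[\{0,1\}^\N]$ and must therefore treat the finite-image case separately; your choice makes that case split (and, in fact, your separate constant-$\phi$ argument) unnecessary.
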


\begin{proof}
Denote by $\mathcal{A}$ the set of functions $f\in C[0,1]$ such that $D(f)\notin\I$ and $[0,1]\setminus D(f)\notin\I$.

Let $\phi\colon \{0,1\}^\N\to C[0,1]$ be a continuous map. We need to show that $\phi^{-1}[\mathcal{A}+h]$ is infinite for some $h\in C[0,1]$.

If $\phi[\{0,1\}^\N]$ is finite, then let $g$ be any element of $\mathcal{A}$ and $h\in C[0,1]$ be such that $\phi^{-1}[\{h\}]$ is infinite. Observe that $h\in\mathcal{A}+h-g$. Hence, $\phi^{-1}[\mathcal{A}+h-g]$ is infinite as well. 

Assume now that $\phi[\{0,1\}^\N]$ is infinite and take any injective convergent sequence $(f_n)\in\phi[\{0,1\}^\N]$. Denote $f=\lim_{n}f_n$. For each $n$, let $a_n,b_n,c_n\in(\frac{1}{n+2},\frac{1}{n+1})$ be such that $a_n<b_n<c_n$ and denote $I_n^1=[a_n,b_n]$ and $I_n^2=[b_n,c_n]$. For all $n$ let also $J_n=[c_{n+1},a_n]$ and $g_n\colon J_n\to\R$ be the linear function given by $g_n(c_{n+1})=0$ and $g_n(a_{n})=(f_n-f_{n+1})(a_n)$. Fix any nowhere differentiable function $z\in C[0,1]$ such that $\left\|z\right\|\leq 1$ and $z(b_n)=z(c_n)=0$ for each $n$. Such function exists as given any nowhere differentiable function $\tilde{z}\in C[0,1]$ with $z(0)=z(1)=0$ (for instance, the Takagi function) we can define $z\in C[0,1]$ in such a way that $z\upharpoonright[b_n,c_n]=\frac{\tilde{z}\circ r_n}{\left\|z\right\|}$ and $z\upharpoonright[c_{n+1},b_n]=\frac{\tilde{z}\circ s_n}{\left\|z\right\|}$ for each $n$, where $r_n:[b_n,c_n]\to[0,1]$ and $s_n:[c_{n+1},b_n]\to[0,1]$ are linear bijections. Define $h\colon[0,1]\to\mathbb{R}$ by:
\begin{itemize}
    \item $h\upharpoonright [c_1,1]=f_1(c_1)$;
    \item $h\upharpoonright I_n^1=f_n\upharpoonright I_n^1$ for each $n\in\N$;
    \item $h\upharpoonright I_n^2=f_n-\frac{z}{n}\upharpoonright I_n^2$ for each $n\in\N$;
    \item $h\upharpoonright J_n=(f_{n+1}+g_n)\upharpoonright J_n$ for each $n\in\N$;
    \item $h(0)=f(0)$.
\end{itemize}
Note that the function $h$ is continuous (continuity in $0$ follows from $\lim_{n}f_n=f$, $\lim_n \sup_{x\in J_n}g_n(x)=0$, and $\lim_{n}\frac{\left\|z\right\|}{n}=0$) and $f_n\in\mathcal{A}+h$ for each $n\in\N$ (as $(f_n-h)\upharpoonright I_n^1$ is constant and $(f_n-h)\upharpoonright I_n^2$ is nowhere differentiable). Since $(f_n)$ is injective, we conclude that $\phi^{-1}[\mathcal{A}+h]$ is infinite.
\end{proof}

The next Corollary is immediate.

\begin{cor}
The set of functions $f\in C[0,1]$ such that $\lambda(D(f))\in(0,1)$ is not Haar-finite in $C[0,1]$.
\end{cor}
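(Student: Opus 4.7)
The plan is to apply Proposition \ref{Positive1} with $\I$ taken to be the $\sigma$-ideal $\mathcal{N}_{[0,1]}$ of Lebesgue null subsets of $[0,1]$. First I would verify that this $\I$ satisfies the hypothesis of the proposition: $\mathcal{N}_{[0,1]}$ is a $\sigma$-ideal, and it contains no interval because every nondegenerate subinterval of $[0,1]$ has strictly positive Lebesgue measure.

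Next I would observe that the set appearing in the corollary coincides with the set appearing in Proposition \ref{Positive1} for this choice of $\I$. Indeed, $D(f) \notin \mathcal{N}_{[0,1]}$ is by definition equivalent to $\lambda(D(f)) > 0$, while $[0,1]\setminus D(f) \notin \mathcal{N}_{[0,1]}$ is equivalent to $\lambda([0,1]\setminus D(f)) > 0$, i.e.\ $\lambda(D(f)) < 1$ (using that $D(f)$ is Borel, by Zahorski's theorem cited earlier, so $\lambda$ is defined on it and on its complement). Combining these two conditions gives exactly $\lambda(D(f)) \in (0,1)$.

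Thus the corollary is a direct instantiation of Proposition \ref{Positive1}, and no additional work is required; the statement follows immediately. There is no substantive obstacle here, since the only verification needed is that Lebesgue null sets form a $\sigma$-ideal containing no interval, which is standard.
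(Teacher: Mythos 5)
Your proposal is correct and matches the paper's intent exactly: the paper labels this corollary as immediate from Proposition \ref{Positive1}, and the intended instantiation is precisely $\I = \mathcal{N}_{[0,1]}$, the $\sigma$-ideal of Lebesgue null sets, which contains no interval. Your verification that the two sets coincide (using that $D(f)$ is Borel, hence measurable) is the only checking needed.
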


Now, we will show that the set of functions $f\in C[0,1]$ such that $D(f)$ is of positive Lebesgue's measure is Haar-countable. The next lemma is a straightforward modification of one of the proofs showing that the Weierstrass function is nowhere Lipschitz.

\begin{lem}
\label{lemWeierstrass}
There is an $a\in(0,1)$ and there is a $b\in\mathbb{N}$ such that for any $x\in(0,1]$ and any increasing sequence $(s_j)$ of positive integers, and any $(r_j)\in\{-1,1\}^\mathbb{N}$, the expression
$$\left|\frac{\sum_{j=0}^{\infty}r_j a^{s_j}\left(\cos\left(b^{s_j} \pi y\right)-\cos\left(b^{s_j} \pi x\right)\right)}{y-x}\right|,$$
where $y$ runs over $[0,1]$, is unbounded.
\end{lem}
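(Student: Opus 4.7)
The plan is to adapt the classical Weierstrass nowhere-differentiability argument, choosing $b \in \N$ odd and $a \in (0,1)$ small but with $ab$ large; explicit numerics such as $a = 1/10$ and $b = 71$ will suffice. Given $x \in (0,1]$, an increasing sequence $(s_j)$ of positive integers, and signs $(r_j) \in \{-1,1\}^{\N}$, for each $m \in \N$ I would exhibit a point $y_m \in [0,1]$ with $|y_m - x|$ of order $b^{-s_m}$ such that the difference quotient at $y_m$ has modulus at least a positive constant times $(ab)^{s_m}$; since $ab > 1$ and $s_m \to \infty$, this forces unboundedness. The candidate $y_m$ is built from the nearest-integer decomposition $b^{s_m}x = \alpha_m + \xi_m$ with $\alpha_m \in \mathbb{Z}$ and $\xi_m \in [-1/2, 1/2)$, by setting $y_m = (\alpha_m + \varepsilon_m)/b^{s_m}$ for $\varepsilon_m \in \{-1, 1\}$ chosen so that $y_m \in [0,1]$; the hypothesis $x > 0$ guarantees that at least one such choice is available for all large $m$, and then $h_m := y_m - x$ satisfies $|h_m| \in [1/(2b^{s_m}), 3/(2b^{s_m})]$.

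Next, I would split the difference quotient as $S_1 + S_2$ at the cutoff $j = m$. For $j < m$, the mean value theorem on each cosine gives $|S_1| \leq \pi\sum_{j < m}(ab)^{s_j} \leq \pi(ab)^{s_m}/(ab-1)$, using that the $s_j$ are distinct positive integers with $s_{m-1}+1 \leq s_m$. For $j \geq m$ the oddness of $b$ is the key: $k_j := b^{s_j - s_m}$ is an odd integer, so $\cos(b^{s_j}\pi y_m) = -(-1)^{\alpha_m}$ is independent of $j$, while $\cos(b^{s_j}\pi x) = (-1)^{\alpha_m}\cos(\pi k_j \xi_m)$. Direct substitution reduces $S_2$ to
\[
S_2 = \frac{-(-1)^{\alpha_m} b^{s_m}}{\varepsilon_m - \xi_m}\sum_{j \geq m} r_j a^{s_j}\bigl(1 + \cos(\pi k_j \xi_m)\bigr),
\]
a sum of nonnegative multiples of the $r_j$.

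The $j = m$ term has magnitude $a^{s_m}(1 + \cos(\pi\xi_m)) \geq a^{s_m}$ because $\xi_m \in [-1/2, 1/2]$ forces $\cos(\pi\xi_m) \geq 0$, while the remaining tail is bounded by $2\sum_{j > m}a^{s_j} \leq 2a^{s_m+1}/(1-a)$. A reverse triangle inequality then yields $|S_2| \geq (2/3)(ab)^{s_m}(1-3a)/(1-a)$ provided $a < 1/3$, so combining with the bound on $S_1$ gives
\[
|S_1 + S_2| \geq (ab)^{s_m}\left[\frac{2}{3}\cdot\frac{1-3a}{1-a} - \frac{\pi}{ab-1}\right],
\]
a strictly positive constant multiple of $(ab)^{s_m}$ under the numerical choice above, which tends to $\infty$ as $m \to \infty$. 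The main obstacle, and the only point where the argument needs genuine work beyond Weierstrass's original, is that the free signs $r_j$ together with the varying phases $\cos(\pi k_j \xi_m)$ destroy the sign-coherence that made the classical tail estimate clean; the workaround is that the $j = m$ term always contributes a sign-independent lower bound of size $a^{s_m}$, and taking $a$ small enough keeps the geometric tail from $j > m$ too small to cancel it no matter how the $r_j$ and phases conspire.
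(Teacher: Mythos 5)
Your proposal is correct and follows essentially the same argument as the paper's proof: the same auxiliary points $y_m$ built from the nearest-integer decomposition of $b^{s_m}x$, the same split of the sum at $j=m$, the same use of the oddness of $b$ to make the tail a signed sum of the nonnegative quantities $1+\cos(\pi k_j\xi_m)$, and the same observation that the $j=m$ term contributes at least $a^{s_m}$ independently of the signs $r_j$. Your constants are in fact marginally sharper than the paper's (tail penalty $\frac{4a}{3(1-a)}$ versus $\frac{4a}{1-a}$), and the explicit choice $a=1/10$, $b=71$ does satisfy your positivity condition, albeit with a very small margin.
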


\begin{proof}
Let $a\in(0,1)$ and $b\in\mathbb{N}$ be such that $b$ is odd, $ab>1$, and $\frac{2}{3}-\frac{4a}{1-a}-\frac{\pi}{ab-1}>0$ (for instance, any $a<\frac{1}{13}$ and any odd $b\in\mathbb{N}$ with $b>\frac{3\pi+1}{a}$ are good as in this case we have $\frac{4a}{1-a}<\frac{1}{3}$ and $\frac{\pi}{ab-1}<\frac{1}{3}$).

For each $m\in\mathbb{N}$, let $w_m\in\mathbb{Z}$ be such that $x_m=x b^{s_m}-w_m\in(-\frac{1}{2},\frac{1}{2}]$ and define $y_m=(w_m-1)/b^{s_m}$. As $-\frac{1}{2}<x b^{s_m}-w_m\leq\frac{1}{2}$, we get $xb^{s_m}-\frac{3}{2}\leq w_m-1<xb^{s_m}-\frac{1}{2}$ and $x-\frac{3}{2b^{s_m}}\leq y_m<x-\frac{1}{2b^{s_m}}$. Thus, $y_m\in[0,1]$ for sufficiently large $m$ (as $x>0$).

Fix any $m\in\mathbb{N}$. Since $|\sin(z)/z|\leq 1$, using the formula for the difference of cosines, for each $z$ we have:
$$\left|\frac{\sum_{j=0}^{m-1}r_j a^{s_j}\left(\cos\left(b^{s_j} \pi y_m\right)-\cos\left(b^{s_j} \pi x\right)\right)}{y_m-x}\right|=$$
$$=\left|\sum_{j=0}^{m-1}r_j (ab)^{s_j} \pi \sin\left(\frac{b^{s_j} \pi (y_m+x)}{2}\right)\frac{\sin\left(\frac{b^{s_j} \pi (y_m-x)}{2}\right)}{\frac{b^{s_j} \pi (y_m-x)}{2}}\right|\leq$$
$$\leq \pi\sum_{j=0}^{s_m-1} (ab)^{j}=\frac{\pi((ab)^{s_m}-1)}{ab-1}\leq\frac{\pi(ab)^{s_m}}{ab-1}.$$

We will need two observations: if $j\geq m$, then $\cos(b^{s_j}\pi y_m)=\cos(b^{s_j-s_m}\pi (w_m-1))=(-1)^{w_m-1}=-(-1)^{w_m}$ (as $b$ is odd and $w_m\in\mathbb{Z}$) and 
$$\cos(b^{s_j}\pi x)=\cos(b^{s_j-s_m}\pi w_m)\cos(b^{s_j-s_m}\pi x_m)-\sin(b^{s_j-s_m}\pi w_m)\sin(b^{s_j-s_m}\pi x_m)=$$
$$(-1)^{w_m}\cos(b^{s_j-s_m}\pi x_m)$$
(as $x=(x_m+w_m)/b^{s_m}$ and $\sin(b^{s_j-s_m}\pi w_m)=0$ by $b,w_m\in\mathbb{Z}$).

Thus, as $b^{s_m}(x-y_m)=1+x_m$ and $m$ is fixed, we have:
$$\left|\frac{\sum_{j=m}^{\infty}r_j a^{s_j}\left(\cos\left(b^{s_j} \pi y_m\right)-\cos\left(b^{s_j} \pi x\right)\right)}{y_m-x}\right|=$$
$$=\left|(ab)^{s_m}\frac{\sum_{j=m}^{\infty}r_j a^{s_j-s_m}(-1)(-1)^{w_m}(1+\cos(b^{s_j-s_m}\pi x_m))}{-(1+x_m)}\right|=$$
$$=\left|(ab)^{s_m}\sum_{j=m}^{\infty}r_j a^{s_j-s_m}\frac{1+\cos(b^{s_j-s_m}\pi x_m)}{1+x_m}\right|.$$
Recall that $x_m\in(-\frac{1}{2},\frac{1}{2}]$, which implies $\cos(\pi x_m)\geq 0$. Hence, we can bound the above from below by:
$$ (ab)^{s_m}\left(\frac{1}{1+\frac{1}{2}}-\sum_{j=m+1}^{\infty}a^{s_j-s_m}\frac{1+\cos(b^{s_j-s_m}\pi x_m)}{1+x_m}\right)\geq$$
$$\geq (ab)^{s_m}\left(\frac{2}{3}-4\sum_{j=1}^{\infty}a^{j}\right)=(ab)^{s_m}\left(\frac{2}{3}-\frac{4a}{1-a}\right).$$

Therefore, we get:
$$\left|\frac{\sum_{j=0}^{\infty}r_j a^{s_j}\left(\cos\left(b^{s_j} \pi y_m\right)-\cos\left(b^{s_j} \pi x\right)\right)}{y_m-x}\right|\geq$$

$$\geq\left|\left|\frac{\sum_{j=m}^{\infty}r_j a^{s_j}\left(\cos\left(b^{s_j} \pi y_m\right)-\cos\left(b^{s_j} \pi x\right)\right)}{y_m-x}\right|-\right.$$
$$-\left.\left|\frac{\sum_{j=0}^{m-1}r_j a^{s_j}\left(\cos\left(b^{s_j} \pi y_m\right)-\cos\left(b^{s_j} \pi x\right)\right)}{y_m-x}\right|\right|\geq$$
$$\geq (ab)^{s_m}\left(\frac{2}{3}-\frac{4a}{1-a}-\frac{\pi}{ab-1}\right)\xrightarrow{m\to\infty}\infty,$$
since $ab>1$ and $\frac{2}{3}-\frac{4a}{1-a}-\frac{\pi}{ab-1}>0$.
\end{proof}

Recall that the $\sigma$-ideal of Lebesgue's null sets is ccc, i.e., every family of pairwise disjoint Borel sets of positive Lebesgue's measure is countable.

\begin{prop}
\label{Positive2}
The set of functions $f\in C[0,1]$ such that $\lambda(D(f))>0$ is Haar-countable in $C[0,1]$.
\end{prop}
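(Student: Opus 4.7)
The plan is to build a continuous map $\phi\colon\{0,1\}^\N\to C[0,1]$ from Weierstrass-type series and exploit the countable chain condition of the Lebesgue null ideal. Set
$$B=\bigcup_{M,k\in\N}\{f\in C[0,1]:\ \lambda(L_M(f))\geq 1/k\}=\{f\in C[0,1]:\ \lambda(L(f))>0\};$$
it is $\mathtt{F_\sigma}$ by Lemmas~\ref{lem3} and~\ref{lem1}. Differentiability of $f$ at $t$ implies $t\in L_M(f)$ for some $M$ (combining the local Lipschitz behaviour near $t$ with the bound $|f(s)-f(t)|\leq 2\|f\|_\infty$ far from $t$), so $D(f)\subseteq L(f)$ and the target set $\mathcal{A}:=\{f:\lambda(D(f))>0\}$ is contained in the Borel hull $B$.

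Fix $a\in(0,1)$ and $b\in\N$ as in Lemma~\ref{lemWeierstrass} and define
$$\phi(x)(t)=\sum_{j=0}^\infty(2x_j-1)\,a^j\cos(b^j\pi t).$$
Uniform convergence of the series makes $\phi$ continuous. Declare $x\sim y$ if $x_j=y_j$ for all but finitely many $j$; each $\sim$-class is countable. The key dichotomy is:
$$\phi(x)-\phi(y)=2\sum_{j:\,x_j\neq y_j}(2x_j-1)\,a^j\cos(b^j\pi t)$$
is a finite trigonometric polynomial, hence globally Lipschitz on $[0,1]$, whenever $x\sim y$; and when $x\not\sim y$, Lemma~\ref{lemWeierstrass} applied with $(s_j)$ the increasing enumeration of the differing coordinates and $r_j=2x_{s_j}-1\in\{-1,1\}$ forces $\phi(x)-\phi(y)$ not to be Lipschitz at any $t\in(0,1]$, so $L(\phi(x)-\phi(y))\subseteq\{0\}$.

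Fix $h\in C[0,1]$ and set $G_x=L(\phi(x)-h)$. Since the $L_M$-condition is additive in $f$, the identity $\phi(x)-h=(\phi(y)-h)+(\phi(x)-\phi(y))$ gives $G_x=G_y$ when $x\sim y$, and $G_x\cap G_y\subseteq L(\phi(x)-\phi(y))\subseteq\{0\}$ when $x\not\sim y$. Suppose $\phi^{-1}[B+h]=\{x:\lambda(G_x)>0\}$ were uncountable. It would then meet uncountably many $\sim$-classes, and choosing one representative per class would produce an uncountable family of positive-measure Borel subsets of $[0,1]$ with pairwise Lebesgue-null intersections. By pigeonhole some $\varepsilon>0$ works for uncountably many of them; disjointifying any $n$ such sets and using the vanishing pairwise intersections yields
$$\lambda\!\left(\bigcup_{i=1}^n G_{x_i}\right)=\sum_{i=1}^n\lambda(G_{x_i})>n\varepsilon,$$
violating $\lambda([0,1])=1$ once $n>1/\varepsilon$. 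Hence $\phi^{-1}[B+h]$ is countable and $\mathcal{A}$ is Haar-countable.

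The main obstacle is precisely the robustness encoded in Lemma~\ref{lemWeierstrass}: nowhere-Lipschitz behaviour must persist after replacing the full Weierstrass series by an arbitrary signed subseries, which is strictly stronger than the classical Weierstrass fact. Granted that lemma, the rest of the proof is the clean combination of the finite-difference equivalence relation on $\{0,1\}^\N$ (whose classes are countable) with the ccc property of the Lebesgue null ideal that is emphasised just before the proposition statement.
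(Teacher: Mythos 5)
Your proposal is correct and takes essentially the same route as the paper's proof: the same Borel hull $\{f\in C[0,1]:\lambda(L(f))>0\}$, the same Weierstrass-type map $\phi$, and the same combination of Lemma~\ref{lemWeierstrass} (applied to the signed subseries indexed by the differing coordinates) with the ccc property of the Lebesgue null ideal. The only cosmetic differences are that you package the reduction to pairs differing in infinitely many coordinates as an equivalence relation with countable classes, and that you spell out via pigeonhole the almost-disjoint-family argument that the paper invokes directly as ccc.
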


\begin{proof}
Denote by $\mathcal{A}$ the set of functions $f\in C[0,1]$ such that $\lambda(L(f))>0$ and note that each function $f\in C[0,1]$ such that $\lambda(D(f))>0$ is in $\mathcal{A}$. Moreover, analogously to the proof of Proposition \ref{ae}, we get that $\mathcal{A}$ is Borel, because $\lambda(L(f))=\lim_{M\to\infty}\lambda(L_M(f))$ and 
$$\mathcal{A}=\bigcup_{k\in\mathbb{N}}\bigcup_{M\in\mathbb{N}}\left\{f\in C[0,1]:\ \lambda(L_M(f))\geq\frac{1}{k}\right\}.$$

Let $a\in(0,1)$ and $b\in\mathbb{N}$ be as in Lemma \ref{lemWeierstrass} and let $\phi\colon \{0,1\}^\N\to C[0,1]$ be given by: 
$$\phi(\alpha)(x)=\sum_{j=0}^{\infty}(-1)^{\alpha(j)}a^j\cos(b^j \pi x)$$ 
for each $\alpha\in \{0,1\}^\N$ and $x\in[0,1]$. Continuity of $\phi$ is obvious as $\lim_m\sum_{j=m}^{\infty}a_j=0$ and, consequently,
\[\lim_m\left(\sum_{j=m}^{\infty}(-1)^{\alpha(j)}a^j\cos(b^j \pi x)\right)=0.\]
Continuity of each $\phi(\alpha)$ follows from Weierstrass M-test and uniform limit theorem.

We claim that $\phi$ witnesses that $\mathcal{A}$ is Haar-countable. Suppose, to the contrary, that there is an $h\in C[0,1]$ and an uncountable set $T \subseteq \{0,1\}^\N$ such that $\phi(\alpha)\in\mathcal{A}+h$ for each $\alpha\in T$. Without loss of generality, we may assume that $\{i\in\N:\ \alpha(i)\neq \beta(i)\}$ is infinite for each two distinct $\alpha,\beta\in T$ (since for every $\alpha\in \{0,1\}^\N$ there are only countably many $\beta\in \{0,1\}^\N$ such that $\{i\in\N:\ \alpha(i)\neq \beta(i)\}$ is finite).

For $\alpha\in T$, consider the sets $L(\phi(\alpha)-h)\setminus\{0\}$. They are Borel (cf. Lemma \ref{lem3}) and of positive Lebesgue's measure. By the ccc property, we conclude that there are $\alpha,\beta\in T$, $\alpha\neq\beta$ such that $L(\phi(\alpha)-h)\cap L(\phi(\beta)-h)\setminus\{0\}\neq\emptyset$. Let $x$ belong to that set (note that $x\in(0,1]$).

Let $(s_i)$ be the increasing enumeration of the set $\{i\in\N:\ \alpha(i)\neq \beta(i)\}$ and denote $r_j=\beta(s_j)-\alpha(s_j)\in\{-1,1\}$. Observe that:
$$\frac{\phi(\alpha)(y)-\phi(\beta)(y)-\phi(\alpha)(x)-\phi(\beta)(x)}{y-x}=2\frac{\sum_{j=0}^{\infty}r_j a^{s_j}\left(\cos\left(b^{s_j} \pi y\right)-\cos\left(b^{s_j} \pi x\right)\right)}{y-x}$$
for any $y\in[0,1]$. Therefore, by Lemma \ref{lemWeierstrass}, the above expression is unbounded. On the other hand, there are $M_\alpha,M_\beta\in\mathbb{N}$ witnessing that $\phi(\alpha)-h$ and $\phi(\beta)-h$ are Lipschitz at $x$. Thus:
$$
\left|\frac{\phi(\alpha)(y)-\phi(\beta)(y)-\phi(\alpha)(x)-\phi(\beta)(x)}{y-x}\right|\leq \left|\frac{\phi(\alpha)(y)-h(y)-\phi(\alpha)(x)-h(x)}{y-x}\right|+
$$
$$
+\left|\frac{\phi(\beta)(y)-h(y)-\phi(\beta)(x)-h(x)}{y-x}\right|\leq M_\alpha+M_\beta
$$
for any $y\in[0,1]$. This is a contradiction.
\end{proof}

The next table summarizes results of Sections $2$ and $3$.

\begin{tabular}{c|c|c|c}
$A=$ & $\{0\}$ & $(0,1)$ & $\{1\}$ \cr
\hline
$\{f\in \mathcal{SD}:\ \lambda(D(f))\in A\}\in$& $\text{Haar-}\mathcal{E}$ & $\text{Haar-countable}$ & Haar-$1$\\
\hline
$\{f\in \mathcal{SD}:\ \lambda(D(f))\in A\}\notin$& $\text{Haar-countable}$ & $\text{Haar-finite}$ & --\\
\end{tabular}

\section{Multidimensional case}

In this Section, we study nowhere differentiable functions on $[0,1]^k$, i.e., functions defined on $[0,1]^k$ which do not have a finite directional derivative at any point along any vector (at points from the boundary of $[0,1]^k$ we require that there is no finite directional one-sided derivative along any vector). Such functions exist (actually, in Proposition \ref{Banach2dim} we even show that the set of such functions is comeager in $C[0,1]^k$), however, it is hard to find a suitable example in the literature. Therefore, below we provide one for $k=2$ (with an informal proof).

\begin{ex}
Let $T \in C[0,1]$ be the Takagi function, i.e., 
$$T(x)=\sum_{n=0}^{+\infty}\frac{1}{2^n}\text{dist}(2^n x,\mathbb{Z}).$$
We define a new function on $[0,1]^2$:
\[F(x_1,x_2) = \tilde{T}\left(\sqrt{x_1^2+x_2^2}\right)\]
(here $\tilde{T}(x)=T(\text{frac}(x))$, where $\text{frac}(x)$ is the fractional part of $x$). Choose a point $(0,0)\neq (\chi_0, \chi_1) \in [0,1]^2$ and a unit vector $\upsilon\in\mathbb{R}^2$. Denote by $\ell$ a line that goes through $(\chi_1,\chi_2)$ and that is parallel to $\upsilon$. For every point $(x_1,x_2)$ of $\ell\cap[0,1]^2$, consider the circle $\mathcal{O}_{(x_1,x_2)}$ centered at the origin that passes through $(x_1,x_2)$. Let $x$ be the $x$-intercept of $\mathcal{O}_{(x_1,x_2)}$ and define a one-to-one correspondence between $[\inf\{\sqrt{x_1^2+x_2^2}:(x_1,x_2)\in\ell\cap[0,1]^2\},\sup\{\sqrt{x_1^2+x_2^2}:(x_1,x_2)\in\ell\cap[0,1]^2\}]$ and $\ell\cap [0,1]^2$ by $x \mapsto (x_1,x_2)$. We will always denote by $x$ a point on $[0,1]$ that corresponds to the point $(x_1,x_2)$ on $\ell$. Note that $F(x_1,x_2) = F(x,0) = \tilde{T}(x)$. Denote by $d_e^2$ the standard euclidean metric on $[0,1]^2$. Moreover, denote by $a$ and $b$ the slope of $\ell$ and its $y$-intercept respectively. Choose two sequences $(u^n)$ and $(v^n)$ that tend to $\chi$ (recall that $\chi$ is the $x$-intercept of $\mathcal{O}_{(\chi_1,\chi_2)}$). Now, for every $n$, assume that all elements of $(u^k)_{k \geq n}$ and $(v^k)_{k \geq n}$ are within some interval $(\alpha_n, \beta_n)$. Without loss of generality, we may assume that both $(\alpha_n)$ and $(\beta_n)$ tend to $\chi$. Using elementary analytical geometry and a little bit of estimation, we can show that:
\[\frac{\alpha_n \sqrt{a^2+1}}{\sqrt{\beta_n^2 (a^2+1) - b^2}} |u^n - v^n| \leq d_e^2\big((u_1^n, u_2^n), (v_1^n, v_2^n)\big) \leq \frac{\beta_n \sqrt{a^2+1}}{\sqrt{\alpha_n^2 (a^2+1) - b^2}} |u^n - v^n| \]
(i.e., the euclidean metric on $(\alpha_n, \beta_n)$ is equivalent to $d_e^2\upharpoonright \{x\in\ell:d_e^2((0,0),x)\in (\alpha_n, \beta_n)\}^2$). For simplicity, let $c_n = \frac{\alpha_n \sqrt{a^2+1}}{\sqrt{\beta_n^2 (a^2+1) - b^2}}$ and $d_n = \frac{\beta_n \sqrt{a^2+1}}{\sqrt{\alpha_n^2 (a^2+1) - b^2}}$. Observe that $(c_n)$ and $(d_n)$ have the same limit. Note that:
 \[c_n \frac{|F(u_1^n, u_2^n) - F(v_1^n, v_2^n)|}{d_e^2\big((u_1^n, u_2^n), (u_1^n, u_2^n)\big)} \leq \frac{|\tilde{T}(u^n) - \tilde{T}(v^n)|}{|u^n - v^n|} \leq d_n \frac{|F(u_1^n, u_2^n) - F(v_1^n, v_2^n)|}{d_e^2\big((u_1^n, u_2^n), (u_1^n, u_2^n)\big)}.\]
Finally, it follows by the Squeeze Theorem that if $F$ is differentiable at $(\chi_1,\chi_2)$ in the direction $\upsilon$, then $T$ must be differentiable at $\chi$. This is impossible. Thus, $F$ is not differentiable at any point of $[0,1]^2\setminus\{(0,0)\}$ in any direction. To complete the proof, proceed analogously for $(\chi_0, \chi_1)=(0,0)$ and use the fact that $T$ does not possess a finite one-sided derivative at $0$.
\end{ex}

Now, we want to show that, unlike the one-dimensional case, the set of somewhere differentiable functions on $[0,1]^k$ is not Haar-null. Actually, this follows from a more general fact. We will need the following notion.

A subset $A$ of an abelian Polish group $X$ is called \emph{thick} if for any compact set $K\subseteq X$ there is an $x\in X$ such that $K+x\subseteq A$ (for more on thick sets see \cite[Section 7]{1}). 

\begin{rem}
The following are equivalent for any Borel set $A$:
\begin{itemize}
    \item[(a)] $A$ is thick;
    \item[(b)] $A$ is not Haar-$(\mathcal{P}(X)\setminus\{X\})$;
    \item[(c)] $A$ is not Haar-$\I$ for any proper semi-ideal $\I$.
\end{itemize}
Moreover, if $A$ is arbitrary, then (b) and (c) are equivalent and (a) implies both of them.
\end{rem}

\begin{proof}
Indeed, (b)$\Longleftrightarrow$(c) is trivial. As $f[\{0,1\}^\N]$ is compact for every continuous $f$, thickness of $A$ ensures us that $A$ is not Haar-$(\mathcal{P}(X)\setminus\{X\})$. Conversely, if $A$ is not thick, then the compact set $K$ witnessing it is a continuous image of $\{0,1\}^\N$. This continuous map witnesses that $A$ is Haar-$(\mathcal{P}(X)\setminus\{X\})$ provided that it is Borel.
\end{proof}

We are ready to prove the aforementioned general result.

\begin{prop}
Let $k>1$. The set of somewhere differentiable functions is thick in $C[0,1]^k$.
\end{prop}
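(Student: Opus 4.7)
The plan is to exploit $k \geq 2$ by using one of the coordinates of $[0,1]^k$ as a label space for the points of $K$, and to choose $h$ so that for each $f \in K$ the sum $f + h$ vanishes identically on a whole line segment inside $[0,1]^k$, which trivially produces a directional derivative equal to $0$ at, say, the midpoint of that segment.

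First I would fix a continuous surjection $\pi \colon \{0,1\}^\N \to K$ (which exists since $K$ is a non-empty compact metric space) together with a homeomorphism $j$ from $\{0,1\}^\N$ onto the middle-thirds Cantor set $C \subseteq [0,1]$, and write $f_c = \pi(j^{-1}(c))$ so that $c \mapsto f_c$ is a continuous surjection $C \to K$. Using the second coordinate of $[0,1]^k$ as the label (this is where $k\geq 2$ is used), define $\eta \colon C \to C[0,1]$ by
\[
\eta(c)(x) = -f_c(x, c, 0, \ldots, 0).
\]
Splitting $\|\eta(c_1) - \eta(c_2)\|_\infty$ through the intermediate value $f_{c_1}(x, c_2, 0, \ldots, 0)$, one sees that $\eta$ is continuous: the first piece tends to $0$ by equicontinuity of the compact set $K$ (Arzel\`a--Ascoli), the second by continuity of $c \mapsto f_c$. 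Since $C$ is closed in $[0,1]$ and $C[0,1]$ is a Banach space, Dugundji's extension theorem furnishes a continuous extension $\tilde{\eta} \colon [0,1] \to C[0,1]$ of $\eta$. Set
\[
h(x_1, x_2, \ldots, x_k) = \tilde{\eta}(x_2)(x_1),
\]
which lies in $C[0,1]^k$ by joint continuity of evaluation.

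Finally, given any $f \in K$, I would pick $c \in C$ with $f_c = f$ (possible by surjectivity) and observe that
\[
(f + h)(x, c, 0, \ldots, 0) = f_c(x, c, 0, \ldots, 0) + \eta(c)(x) = 0
\]
for every $x \in [0,1]$. Hence $f + h$ vanishes identically on the segment $[0,1] \times \{c\} \times \{0\}^{k-2}$, so its directional derivative along $(1, 0, \ldots, 0)$ at the point $(1/2, c, 0, \ldots, 0)$ exists and equals $0$. Thus $f + h \in \mathcal{SD}$ and $K + h \subseteq \mathcal{SD}$, as required. The only non-trivial checks are the continuity of $\eta$ and the existence of a continuous Banach-space-valued extension to $[0,1]$; both are routine once $k \geq 2$ is used to reserve a coordinate for the Cantor-set label, and no delicate pointwise analysis of individual $f \in K$ is needed.
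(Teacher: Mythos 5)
Your proof is correct and follows essentially the same route as the paper: you parametrize the compact set $K$ by a Cantor set sitting inside one coordinate of $[0,1]^k$ and build $h$ so that each $f+h$ vanishes on the corresponding line segment, hence has a directional derivative there. The only cosmetic differences are that the paper labels by the first coordinate and extends the scalar function $(c,y)\mapsto\phi(c)(c,y)$ from $C\times[0,1]$ via Tietze, whereas you label by the second coordinate and extend the $C[0,1]$-valued map $\eta$ via Dugundji before uncurrying.
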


\begin{proof}
Firstly, we will assume that $k=2$. 

Let $C$ be the ternary Cantor set (which is homeomorphic to $\{0,1\}^\N$) and $\phi\colon C\to C[0,1]^2$ be continuous. We will define a continuous function $f\colon[0,1]^2\to\R$ such that for all $c\in C$ the function $\phi(c)-f$ is differentiable along $(0,1)$ in each point of the form $(c,x)$, $x\in [0,1]$.

Let $F\colon C\times[0,1]\to\R$ be given by $F\upharpoonright \{c\}\times[0,1]=\phi(c)\upharpoonright \{c\}\times[0,1]$ for each $c\in C$. 

We need to show that $F$ is continuous. Fix any $(c_0,y_0)\in C\times[0,1]$ and $\varepsilon>0$. Since $\phi(c_0)$ is continuous at $(c_0,y_0)$, there is an open neighborhood $(c_0,y_0)\in U\subseteq [0,1]^2$ such that $|\phi(c_0)(x,y)-\phi(c_0)(c_0,y_0)|<\frac{\varepsilon}{2}$ for every $(x,y)\in U$. Moreover, since $\phi$ is continuous, there is an open neighborhood $c_0\in V\subseteq [0,1]$ such that $\left\|\phi(c)-\phi(c_0)\right\|<\frac{\varepsilon}{2}$ for each $c\in V\cap C$. Thus, we have:
$$|F(c,y)-F(c_0,y_0)|=|\phi(c)(c,y)-\phi(c_0)(c_0,y_0)|\leq $$
$$\leq|\phi(c)(c,y)-\phi(c_0)(c,y)|+|\phi(c_0)(c,y)-\phi(c_0)(c_0,y_0)|\leq$$
$$\leq\left\|\phi(c)-\phi(c_0)\right\|+|\phi(c_0)(c,y)-\phi(c_0)(c_0,y_0)|<\frac{\varepsilon}{2}+\frac{\varepsilon}{2}=\varepsilon$$
for all $(c,y)\in U\cap ((V\cap C)\times [0,1])$. Hence, $F$ is continuous.

Using Tietze extension theorem we get a continuous function $f\colon[0,1]^2\to\R$ such that $f\upharpoonright C\times[0,1]=F$. Then, $\phi(c)-f\upharpoonright\{c\}\times[0,1]$ is constantly equal to $0$ for each $c\in C$. Hence, $\phi(c)-f$ is somewhere differentiable and we are done.

If $k>2$ then define $F\colon C\times[0,1]\times\{0\}^{k-2}\to\R$ by $F\upharpoonright \{c\}\times[0,1]\times\{0\}^{k-2}=\phi(c)\upharpoonright \{c\}\times[0,1]\times\{0\}^{k-2}$ for each $c\in C$ and observe that $F$ is continuous for the same reason as above. Thus, using Tietze extension theorem we get a continuous function $f\colon[0,1]^k\to\R$ such that $\phi(c)-f$ is somewhere differentiable. 
\end{proof}

Although the set of somewhere differentiable functions on $[0,1]^k$ is not Haar-$\I$ for any semi-ideal $\I$, it is meager. In the one-dimensional case this was shown by Banach (see \cite{Banach}). The authors find it surprising (and remain skeptical) that no multidimensional case of this theorem can be found in the literature, as the following reasoning is only a slight modification of the one-dimensional proof.

\begin{prop}
\label{Banach2dim}
The set of nowhere differentiable functions on $[0,1]^k$ is comeager in $C[0,1]^k$.
\end{prop}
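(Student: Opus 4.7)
Plan: I would mimic the classical Baire-category argument of Banach. For each $n\in\N$, let
\[ A_n=\{f\in C[0,1]^k\colon \exists (x,\upsilon)\in [0,1]^k\times S^{k-1},\ |f(x+t\upsilon)-f(x)|\leq n|t|\text{ for every }t\in\R\text{ with }x+t\upsilon\in[0,1]^k\}, \]
the set of functions that are globally $n$-Lipschitz at some point along some line. A standard argument using boundedness of $f$ on the compact $[0,1]^k$ shows that every somewhere (one-sided) directionally differentiable $f$ lies in some $A_n$: near the point of differentiability the line-wise difference quotient is locally bounded, and for $|t|$ bounded away from $0$ one uses $2\|f\|_{\infty}/|t|$ as a trivial Lipschitz constant. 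Closedness of $A_n$ is a compactness argument on $[0,1]^k\times S^{k-1}$: given $(f_m)\subseteq A_n$ converging uniformly to $f$ with witnesses $(x_m,\upsilon_m)\to (x,\upsilon)$, the bound $|f_m(x_m+t\upsilon_m)-f_m(x_m)|\leq n|t|$ passes to the limit via uniform convergence and equicontinuity, with boundary values of $t$ handled by continuity of $f$. It therefore suffices to prove each $A_n$ is nowhere dense.

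For the perturbation step, fix $f\in C[0,1]^k$ and $\epsilon>0$. I would first approximate $f$ by a polynomial $P$ with $\|P-f\|<\epsilon/2$ via Stone--Weierstrass; being $C^1$ on $[0,1]^k$, $P$ has some Lipschitz constant $L$. The crucial ingredient is an auxiliary function $G\in C[0,1]^k$ whose directional difference quotients satisfy
\[ \limsup_{t\to 0,\,x+t\upsilon\in[0,1]^k}\frac{|G(x+t\upsilon)-G(x)|}{|t|}=\infty \]
for every $(x,\upsilon)\in [0,1]^k\times S^{k-1}$. For $k=2$ such a $G$ is provided by the radial Takagi function $F$ from the Example above: the Squeeze-Theorem inequalities written out there in fact show that unbounded difference quotients of $T$ at $\chi$ force unbounded difference quotients of $F$ at $(\chi_1,\chi_2)$ along $\upsilon$, not merely failure of differentiability. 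The same radial recipe $\tilde T(\|x\|)$ handles general $k$, with minor extra care at lines through the origin and at lines tangent to a level sphere at their closest point to $0$. Setting $g=P+\delta G$ with $\delta$ chosen so that $\delta\|G\|_{\infty}<\epsilon/2$, the elementary estimate
\[ \frac{|g(x+t\upsilon)-g(x)|}{|t|}\ \geq\ \delta\frac{|G(x+t\upsilon)-G(x)|}{|t|}-L \]
forces the left-hand side to exceed $n$ along some sequence $t_j\to 0$ for every $(x,\upsilon)$, so $g\notin A_n$ while $\|g-f\|<\epsilon$.

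Consequently $\bigcup_{n\in\N}A_n$ is meager and contains the set of somewhere directionally differentiable functions, which proves that the nowhere differentiable functions are comeager in $C[0,1]^k$. The main obstacle I anticipate is the rigorous construction of $G$ for $k>2$: the case $k=2$ is supplied by the Example, but in higher dimensions verifying unbounded directional difference quotients of $\tilde T(\|x\|)$ along lines through the origin or tangent to a level sphere at their closest point to $0$ requires a slightly more delicate reparametrization analysis than the one written out in the Example.
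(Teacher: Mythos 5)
Your overall Baire-category skeleton --- closed sets $A_n$ exhausting the somewhere directionally Lipschitz functions, plus a perturbation argument showing each $A_n$ is nowhere dense --- is the same as the paper's, and the exhaustion and closedness steps are fine. The gap is in the perturbation step: everything hinges on producing a continuous $G$ on $[0,1]^k$ whose directional difference quotients are unbounded at \emph{every} point in \emph{every} direction, and the candidate you propose, $G(x)=\tilde T(\|x\|)$, does not have this property, already for $k=2$. Fix $x\neq 0$ and let $\upsilon$ be a unit vector with $\upsilon\perp x$, i.e.\ tangent at $x$ to the sphere through $x$. Then $\|x+t\upsilon\|-\|x\|=\sqrt{\|x\|^2+t^2}-\|x\|\leq t^2/(2\|x\|)$, while the Takagi function has modulus of continuity $O(h\log(1/h))$; hence
$$\frac{|G(x+t\upsilon)-G(x)|}{|t|}=O\bigl(|t|\log(1/|t|)\bigr)\longrightarrow 0\quad\text{as }t\to 0,$$
so $G$ is directionally \emph{differentiable}, with derivative $0$, at every point off the origin in the tangential direction. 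This is not ``a slightly more delicate reparametrization analysis'': the quadratic degeneracy of $r(s)=\sqrt{s^2+d^2}$ at $s=0$ destroys unboundedness for any one-dimensional profile that is H\"older of exponent $>1/2$, and both $T$ and the Weierstrass function with the parameters of Lemma \ref{lemWeierstrass} are such. The same computation shows that even the weaker property you import from the Example (mere non-differentiability of the radial Takagi function in every direction) fails at these tangent points, which is exactly where the Example's constants $c_n,d_n$ blow up.

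The paper sidesteps the need for such a universal $G$ by stratifying the directions: the sets $E_n^{i,j}$ only record Lipschitz behaviour along unit vectors whose first coordinate is at least $\frac{1}{n+1}$, and the leftover direction $(0,1)$ is handled by separate sets $F_n^{i,j}$. A one-variable sawtooth $\frac{\epsilon}{2}\mathrm{dist}(mx_1,\mathbb{Z})$ added to a piecewise linear approximant then has directional derivative $\pm\frac{\epsilon}{2}mv_1$, which is uniformly large on the whole cap of directions under consideration, and a sawtooth in $x_2$ handles $F_n^{i,j}$. To repair your argument you must either construct a genuinely nowhere-directionally-Lipschitz-in-every-direction function (a nontrivial task in its own right --- note the cancellation issue behind the paper's open problem about $T(x)+T(y)$), or partition $S^{k-1}$ into finitely many caps on each of which some fixed coordinate of $\upsilon$ is bounded away from $0$ and perturb by a sawtooth in that coordinate; the latter is essentially the paper's proof.
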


\begin{proof}
We will consider $k = 2$, but the argument works for other $k$'s as well. Let $E^{0,0}_n$ be the set of these functions $f \in C[0,1]^2$ such that there is a pair $(x,y)\in[0,1-\frac{1}{n+1}]^2$ and there is a unit vector $(v_1,v_2) \in [\frac{1}{n+1},1] \times [-1,1]$ such that for all $h \in (0,\frac{1}{n})$ it holds that $|f((x,y) + h(v_1,v_2))-f(x,y)| < nh$. Define $E^{0,1}_n$, $E^{1,0}_n$ and $E^{1,1}_n$ analogously replacing $R_n^{0,0}=[0,1-\frac{1}{n+1}]^2$ with $R_n^{0,1}=[0,1-\frac{1}{n+1}]\times[\frac{1}{n+1},1]$, $R_n^{1,0}=[\frac{1}{n+1},1]\times[0,1-\frac{1}{n+1}]$ and $R_n^{1,1}=[\frac{1}{n+1},1]^2$, respectively. Similarly, let $F^{i,j}_n$ be the set of these functions $f \in C[0,1]^2$ such that there is a pair $(x,y)\in R_n^{i,j}$ such that for all $h \in (0,\frac{1}{n})$ it holds that $|f(x,y+h)-f(x,y)| < nh$. It suffices to prove that for each $i,j\in\{0,1\}$ and each $n \in \mathbb{N}$:
\begin{itemize}
    \item[a)] $E_n^{i,j}$ is closed;
    \item[b)] $F_n^{i,j}$ is closed;
    \item[c)] $E_n^{i,j}$ is nowhere dense;
    \item[d)] $F_n^{i,j}$ is nowhere dense;
\end{itemize}
and also that:
\begin{itemize}
    \item[e)] $C[0,1]^2 \backslash \bigcup_{n\in\N} \bigcup_{i,j\in\{0,1\}}(E^{i,j}_n \cup F^{i,j}_n)$ is a subset of the set of nowhere differentiable functions on $[0,1]^2$.
\end{itemize}

For $a)$ and $b)$, it is enough to use the argument from the standard one-dimensional case. We will show that $E_m^{0,0}$ is closed. The other cases are similar.

Let $(f_n)\subseteq E_m^{0,0}$ and denote $f=\lim_n f_n$. For each $n\in\mathbb{N}$ there is $(x_n,y_n)\in[0,1-\frac{1}{m+1}]^2$ and there is a unit vector $(v^n_1,v^n_2) \in [\frac{1}{m+1},1] \times [-1,1]$ such that for all $h \in (0,\frac{1}{m})$ it holds that $|f_n((x_n,y_n) + h(v^n_1,v^n_2))-f_n(x_n,y_n)| < mh$. By the Bolzano-Weierstrass theorem, passing to a subsequence, if necessary, without loss of generality we can assume that $\lim_n(x_n,y_n)=(x,y)\in[0,1-\frac{1}{m+1}]^2$ and $\lim_n(v^n_1,v^n_2)=(v_1,v_2) \in [\frac{1}{m+1},1] \times [-1,1]$. Fix $h\in (0,\frac{1}{m})$ and observe that:
\[\left|f((x,y) + h(v_1,v_2))-f(x,y)\right|\leq
\left|f((x,y) + h(v_1,v_2))-f((x_n,y_n) + h(v^n_1,v^n_2))\right|+\]
\[+\left|f((x_n,y_n) + h(v^n_1,v^n_2))-f_n((x_n,y_n) + h(v^n_1,v^n_2))\right|+\]
\[+\left|f_n((x_n,y_n) + h(v^n_1,v^n_2))-f_n((x_n,y_n))\right|+
\left|f_n((x_n,y_n))-f((x_n,y_n))\right|+\]
\[+\left|f((x_n,y_n))-f((x,y))\right|\leq
\left|f((x,y) + h(v_1,v_2))-f((x_n,y_n) + h(v^n_1,v^n_2))\right|+\]
\[+\left\|f-f_n\right\|+
mh+
\left\|f-f_n\right\|+
\left|f((x_n,y_n))-f((x,y))\right|.\]
If $n\to+\infty$ then continuity of $f$ at $(x,y)$ and at $((x,y) + h(v_1,v_2))$ together with $f=\lim_n f_n$ gives us that:
\[\left|f((x,y) + h(v_1,v_2))-f(x,y)\right|\leq mh\]
and we can conclude that $f\in E_m^{0,0}$.

For $c)$, we will prove that for every two-dimensional piecewise linear function $g$ and every $\epsilon > 0$ there is a function $f \not\in E^{0,0}_n$ such that the norm of $f-g$ is below $\epsilon$ (the cases of $E^{0,1}_n$, $E^{1,0}_n$ and $E^{1,1}_n$ are analogous). As the set of all two-dimensional piecewise linear functions is dense in $C[0,1]^2$, the thesis will follow.

Let $g$ and $\epsilon$ be defined as above and let $M$ be equal to the maximal slope of $g$. Subsequently, choose $m$ such that $\frac{m}{n+1}\frac{\epsilon}{2} > M+n$. Now, define a function $f(x,y)=g(x,y) + \frac{\epsilon}{2} \text{dist}(mx,\mathbb{Z})$. It is easy to see that $|f(x,y)-g(x,y)| < \epsilon$ for all $(x,y)$. Let $(x,y) \in [0,1-\frac{1}{n+1}]^2$ and let $v=(v_1,v_2) \in [\frac{1}{n+1},1] \times [-1,1]$ be a unit vector. Observe that:
\[\left|\frac{\partial f}{\partial v}\right| = \left|\frac{\partial g}{\partial v} + \frac{\epsilon}{2}m v_1\frac{\partial \text{dist}(\cdot,\mathbb{Z})}{\partial x}\right| > n\]
as $\left|\frac{\partial \text{dist}(\cdot,\mathbb{Z})}{\partial x}\right|=1$ and $v_1>\frac{1}{n+1}$.

For $d)$, we just repeat an argument we just gave: we choose $m$ such that $m\frac{\epsilon}{2} > M+n$, define $f(x,y)=g(x,y) + \frac{\epsilon}{2} \text{dist}(my,\mathbb{Z})$ and observe that:
\[\left|\frac{\partial f}{\partial y}\right| = \left|\frac{\partial g}{\partial y} + \frac{\epsilon}{2}m\frac{\partial \text{dist}(\cdot,\mathbb{Z})}{\partial y}\right| > n.\]

Finally, for $e)$, let us suppose that $f \in C[0,1]^2 \backslash (\bigcup_{n \in \mathbb{N}}\bigcup_{i,j\in\{0,1\}} E^{i,j}_n \cup \bigcup_{n \in \mathbb{N}} F^{i,j}_n)$, choose $(x,y) \in (0,1)^2$, and choose a unit vector $v$. There are two cases to consider, but they essentially come down to the same argument. If the vector $v$ is different than $(0,1)$, we will use the sets $E^{i,j}_n$, if not, we will use $F^{i,j}_n$. Without loss of generality, we will continue under assumption that $v = (0,1)$. Let $i,j\in\{0,1\}$ be such that $(x,y)\in\bigcap_{n\in\mathbb{N}} R_n^{i,j}$. Since $f \in \bigcap_{n \in \mathbb{N}} C[0,1]^2 \backslash F^{i,j}_n$, it follows that for all $n \in \mathbb{N}$ there is an $h_n \in (0,\frac{1}{n})$ such that $|f(x,y+h_n) - f(x,y)| \geq n h_n$. It is now easy to see that $f$ is nowhere differentiable.
\end{proof}

We end with an open problem which occurred during our studies.

Here, we examine the function $F\colon[0,1]^2\to\R$ given by $F(x,y)=T(x)+T(y)$, where $T\colon[0,1]\to\R$ is the Takagi's function. We considered this function in the context of nowhere differentiable functions on $[0,1]^2$ as $F$ seemed to be a nice example of such a function. It is obvious that it is differentiable neither along $(1,0)$ nor along $(0,1)$. Moreover, we managed to obtain one partial result: $F$ is differentiable at no point along $(1,1)$. However, we were unable to solve the following. 

\begin{problem}
Is $F$ nowhere differentiable on $[0,1]^2$?
\end{problem}

\end{document}